\documentclass[12pt]{article}
\usepackage{amsmath,amsfonts,amssymb,amsthm,amscd}
\title{On minimal flows,  definably amenable groups, and  $o$-minimality}
\date{\today}
\author{Anand Pillay\thanks{Partially supported by NSF grant DMS-1360702}\\University of Notre Dame\and Ningyuan Yao\thanks{Supported by a grant from the Chinese government and by the University of Notre Dame}\\Sun Yat-Sen University}
\newtheorem{Theorem}{Theorem}[section]
\newtheorem{Proposition}[Theorem]{Proposition}
\newtheorem{Definition}[Theorem]{Definition} 
\newtheorem{Remark}[Theorem]{Remark}
\newtheorem{Lemma}[Theorem]{Lemma}
\newtheorem{Corollary}[Theorem]{Corollary}
\newtheorem{Fact}[Theorem]{Fact}

\newtheorem{Question}[Theorem]{Question}

\newcommand{\R}{\mathbb R}

\newcommand{\N}{\mathbb N}

\begin{document}
\maketitle

\begin{abstract}  We study  definably amenable groups in $NIP$ theories, focusing  on the problem  raised in \cite{Newelski1} of  whether weak generic types coincide with almost periodic types, equivalently whether the union of minimal subflows of a suitable type space is closed.  We give fairly definitive results in the $o$-minimal context, including a counterexample.

\end{abstract}
\section{Introduction and preliminaries. } Given a group $G$ definable over (or in) a structure $M$ we have the action, by homeomorphisms,  of $G$ on the space $S_{G}(M)$ of complete types over $M$ which concentrate on $G$. Various invariants of such an action, including minimal  $G$-subflows of $S_{G}(M)$, are suggested by topological dynamics, and in the case when $Th(M)$ is stable, coincide with invariants at the heart of stable group theory such as the space of generic types.  When $Th(M)$  is not necessarily stable, these invariants are useful for generalizing stable group theory. This theme has been pursued in several recent papers, sometimes under the assumption that $T$ has $NIP$.  
The  class of definably amenable groups is a reasonable choice for the class of ``stable-like" groups, in this $NIP$ environment. For example, definable amenability can be characterized by the existence of  ``generic types"  in the sense of forking. 

As discussed in the original   paper on the topic, namely  \cite{Newelski1} , {\em almost periodic} types are among  the first ``new" objects suggested by the dynamics point of view, where $p\in S_{G}(M)$ is by definition almost periodic  if the closure of the $G$-orbit of $p$ is a minimal closed $G$-invariant subset of $S_{G}(M)$.   On the other hand {\em weakly generic} formulas and types were introduced in \cite{Newelski-Petrykowski} as a substitute for generic formulas and types as the latter may not always exist. Briefly, a definable set (or formula) $X\subseteq G$  is weakly generic  if $X\cup Y$ is generic for some nongeneric definable $Y$. Where a definable set is  generic if finitely many translates cover the whole group.

Among  the nice observations in \cite{Newelski1} was that the class of weak generic types in $S_{G}(M)$ is precisely the closure of the class of almost periodic types. An example was given where the two classes differ and the problem was explicitly raised (Problem 5.4 of \cite{Newelski1}) of finding an $o$-minimal or even just $NIP$ example.  This is what we address  in the current paper.    Newelski's question had nothing to do with definable amenability in itself, but there is no harm in looking within the class of definably amenable groups  for a counterexample. In fact  Newelski's question is essentially restated in  \cite{Chernikov-Simon} (Question 3.35)  in the special case of definably amenable groups in $NIP$ theories.

The advantage of working in the $o$-minimal case is that we have a good understanding of definably amenable groups; they are precisely definable groups $G$ fitting into a (definable) short exact sequence $$1\to H\to G \to C\to 1$$
 where $H$ is torsion-free and $C$ is ``definably compact" \cite{CP}. 

 There are at least two contexts for studying the topological dynamical invariants. What we call the {\em global context}  is where $M$ is a saturated model of $T$.  What we call the {\em local context}  is where $M$ is any model of $T$, and we pass to the Shelah expansion $M_{0} = M^{ext}$ of $M$ by externally definable sets and consider instead the action  of $G$ on $S_{G}(M_{0})$. Each context has its advantages. For example in the local context almost all the machinery of topological dynamics becomes available (such as the semigroup structure on the type space), and moreover certain results will transfer to $S_{G}(M)$.    The current paper can also be seen as beginning a ``fine" description of the minimal subflows of the various type spaces in the definably amenable $o$-minimal situation. 

We begin with the positive results; namely equality of weak generics and almost periodics in some cases. 

\begin{Theorem}  (Global case). Suppose $T$ is $o$-minimal, $G$ a definable, definably amenable group, defined over a saturated model $M$ of $T$. Suppose that either $G= H$ or $dim(H)\leq 1$. 
Then the set of weakly generic types over $M$ coincides the with the set of almost periodic types over $M$. 

\end{Theorem}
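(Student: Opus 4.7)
The plan rests on Newelski's result, recalled in the introduction: the weakly generic types in $S_G(M)$ form the closure of the almost periodic types. The theorem therefore reduces to showing that the set $\mathrm{AP}(G,M)$ of almost periodic types, equivalently the union of the minimal $G$-subflows of $S_G(M)$, is closed. I treat the two hypotheses separately.

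For $G=H$ (so $G$ itself is torsion-free), the key input from the structure theory of torsion-free definably amenable $o$-minimal groups is that $G^{00}=G$. In the Chernikov--Simon framework for definably amenable $NIP$ groups, $G^{00}=G$ forces every $f$-generic type to be $G$-invariant; in particular every almost periodic type is $G$-invariant. Conversely every $G$-invariant type is a fixed point of the action and so trivially almost periodic. Hence $\mathrm{AP}(G,M)$ equals the set of $G$-invariant types, which is the fixed-point set of a continuous action on the Hausdorff space $S_G(M)$ and therefore closed.

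For $\dim(H)\le 1$, I use the definable short exact sequence $1\to H\to G\to C\to 1$, with $H$ torsion-free of dimension at most one (so essentially $(\R,+)$ or trivial) and $C$ definably compact. The quotient induces an equivariant continuous surjection $\pi_\ast:S_G(M)\to S_C(M)$. Since $C$ is definably compact, the Chernikov--Simon analysis of $C/C^{00}$ (a compact connected Lie group) gives a concrete description of $\mathrm{AP}(C,M)$ as a closed union of minimal $C$-subflows. I then want to establish the factorization criterion: $p\in S_G(M)$ is almost periodic iff $\pi_\ast(p)\in\mathrm{AP}(C,M)$ and the restriction of $p$ to its fiber (whose residual group of symmetries is $H$) is $H$-invariant. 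The second condition is handled by the previous case applied fiberwise to $H$; both conditions are closed (the first by continuity of $\pi_\ast$ together with closedness of $\mathrm{AP}(C,M)$, the second by the torsion-free case), so $\mathrm{AP}(G,M)$ is closed.

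The main obstacle I anticipate is the factorization criterion in the second case: showing that almost periodicity in $S_G(M)$ is \emph{exactly} the conjunction of the two conditions above, not merely implied by or implying one of them. This will require a careful use of the Ellis semigroup structure on $S_G(M)$ and the interplay of the $H$- and $C$-actions mediated by the extension. A secondary difficulty is justifying the key structural input in the first case---that torsion-free definably amenable $o$-minimal $G$ has $G^{00}=G$ and that under this condition $f$-generic implies invariant---but both ingredients are available in the literature.
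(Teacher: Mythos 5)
Your reduction to ``the almost periodic types form a closed set'' via Newelski's lemma is the right starting point, and your treatment of the case $G=H$ is essentially the paper's: $G$ torsion-free forces $G^{00}=G$, so every weak generic is $f$-generic, hence $G$-invariant, hence a fixed point and trivially almost periodic (this is Lemma 1.16 together with Remark 1.20 in the paper). That half is fine.

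The case $\dim(H)\le 1$ has a genuine gap, and it is exactly where you flag it. The factorization criterion you propose --- $p$ almost periodic iff $\pi_*(p)\in\mathrm{AP}(C,M)$ and the ``fiber restriction'' is $H$-invariant --- is not merely unproved; as stated it is incomplete. Writing a realization of $p$ as $f(\pi(a))\cdot h$ with $h\in H$ (using a definable section $f$, after first showing $H$ is central when $\dim H=1$), the two conditions ``$\pi(p)$ generic for $C$'' and ``$tp(h/M)$ $H(M)$-invariant'' do not by themselves determine $p$: one also needs a condition pinning down the joint type of $\pi(a)$ and $h$. In the local analogue (Lemma 3.3 of the paper) this appears explicitly as the requirement that $h$ realize the \emph{unique heir} of $tp(h/M_0)$ over $M_0,\pi(a)$, and this extra condition is essential. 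So a criterion of the shape you describe needs a third clause, and establishing even the corrected criterion is nontrivial.

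What makes the \emph{global} case work without Ellis semigroup machinery --- and this is the ingredient missing from your proposal --- is \emph{weak orthogonality}. Over a saturated model, the paper observes (Remark 2.3) that the invariant $1$-type $p_0^{+}$ of $H$ is weakly orthogonal to every generic type $q$ of $C$: the partial type $p_0^{+}(x_0)\cup q(x_1)$ has a unique completion. This collapses the would-be independence clause automatically, so that $f(q)\cdot p_0^{+}$ is a well-defined type. The paper then computes $cl(G(M)\cdot p)$ explicitly: it is exactly $\{f(q')\cdot p_0^{+}: q' \text{ generic for } C\}$ (Lemma 2.4), and since $\pi$ restricts to a homeomorphism of this set onto the unique minimal flow of $S_C(M)$, it is itself minimal (Corollary 2.5). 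Thus the argument shows directly that the orbit closure of any weak generic is minimal, rather than first characterizing $\mathrm{AP}(G,M)$ and then proving it closed. The weak-orthogonality step uses saturation of $M$ in an essential way (it fails over small models, which is why Section 3 of the paper needs the heir condition and the semigroup $*$). If you want to pursue your route, you must supply the missing independence clause and prove the resulting criterion, at which point you will likely reinvent either the weak-orthogonality argument or the semigroup argument of the local case.
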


\begin{Theorem} (Local case)
Suppose $T$ is $o$-minimal and $G$ is a definable, definably amenable group, definable over a model $M$ of $T$. Suppose moreover that $G = C\times H$ and either $G = H$ or $dim(H)\leq 1$. Let $M_{0} = M^{ext}$. Then working in $S_{G}(M_{0})$ the set of weakly generic types coincides with the set of almost periodic types.  Likewise for $S_{G}(M)$, weak generic types coincide with almost periodic types.
\end{Theorem}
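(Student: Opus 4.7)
The plan is to derive Theorem~1.2 from the global Theorem~1.1 via the product decomposition $G = C \times H$ and a transfer between the type spaces $S_G(\bar M)$, $S_G(M_0)$, and $S_G(M)$, where $\bar M \succeq M_0 = M^{ext} \succeq M$ with $\bar M$ saturated.

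First I would exploit the product structure. The projections induce $G(M_0)$-equivariant continuous maps $\pi_C, \pi_H$ on the corresponding type spaces, and I would show that almost periodicity and weak genericity for $p \in S_G(M_0)$ are detected by the pair $(\pi_C(p), \pi_H(p))$. For the compact factor $C$: being definably compact in an $o$-minimal theory, $C$ is fsg, and for fsg groups in $NIP$ it is known that weak generic, almost periodic, and $f$-generic types all coincide and form a unique minimal subflow, so this factor is free. For the factor $H$: under the hypothesis $G = H$ or $\dim(H) \leq 1$, Theorem~1.1 applied in $\bar M$ gives the coincidence of weak generic and almost periodic types in $S_H(\bar M)$, and I would transfer this down to $S_H(M_0)$ using the restriction $\bar{\pi}:S_H(\bar M) \to S_H(M_0)$, noting that weak genericity of a formula (defined via finitely many translates and a single nongeneric formula) is preserved between the two settings, and that almost periodicity transfers via a Zorn argument inside the preimage of a given minimal subflow.

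For the passage from $S_G(M_0)$ to $S_G(M)$, I would use the continuous $G(M)$-equivariant restriction $r:S_G(M_0) \to S_G(M)$. Almost periodic types push forward and lift via standard arguments using the Ellis semigroup structure available on $S_G(M_0)$, which is the principal benefit of working with the Shelah expansion. Weak generic formulas over $M$ are \emph{a fortiori} weak generic over $M_0$, and the converse uses Shelah's honest-definitions theorem in $NIP$ to approximate a weak generic formula over $M_0$ by one over $M$. The main obstacle I foresee is the product step: checking that almost periodicity of $p \in S_G(M_0)$ really decomposes as almost periodicity of each projection. The orbit closure of $p$ under $G(M_0) = C(M_0) \times H(M_0)$ need not literally be a product of the two factor closures, since the type $p$ may couple the two coordinates in a nontrivial way. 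The remedy will be to exploit the fsg property of $C$ to show that $\pi_C(p)$ lies in the unique minimal $C(M_0)$-subflow and is therefore rigid as far as minimality is concerned; it is precisely here that the hypothesis $G = C \times H$ (rather than merely a short exact sequence) is used.
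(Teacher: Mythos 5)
Your central structural claim — that almost periodicity of $p \in S_G(M_0)$ for $G = C\times H$ is ``detected by the pair $(\pi_C(p), \pi_H(p))$'' — is false, and this is a genuine gap rather than a repairable wrinkle. You do flag the worry that ``the type $p$ may couple the two coordinates in a nontrivial way,'' but the proposed remedy (use fsg rigidity of $C$ so that $\pi_C(p)$ sits in the unique minimal $C$-subflow) does not uncouple anything: it only says the $C$-projection is necessarily generic, which is one of the conditions, not all of them. The paper's Lemma~3.3 shows that for $p = tp(c,h/M_0)$, almost periodicity requires, in addition to $tp(c/M_0)$ being generic and $tp(h/M_0)$ being $H$-invariant, the coupling condition that $h$ realize the \emph{unique heir} of $tp(h/M_0)$ over $M_0, c$. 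Theorem~1.3 of the paper is exactly an example ($G = S^1\times(\R,+)^2$) of a weak generic type whose $C$- and $H$-projections are both almost periodic but which is itself not almost periodic because this heir condition fails; the example has $\dim H = 2$, but its existence destroys the general ``product detection'' principle you rely on. Thus your argument, applied verbatim to that $G$, would incorrectly prove that every weak generic type is almost periodic.

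What is actually needed, and what the paper does, is a local semigroup analysis that makes the coupling explicit. The key abstract step (Lemma~3.2, proved via the $*$-product on $S_G(M_0)$) is: $p$ is almost periodic iff $p$ is $H(M)$-invariant and $v * p = p$ for some generic idempotent $v\in S_C(M_0)$. Specializing to $G = C\times H$ (Lemma~3.3) yields the criterion with the heir condition. Then the $\dim H = 1$ hypothesis enters: there are exactly two $H(M_0)$-invariant types $p_0^{\pm}$ in $S_H(M_0)$ (Lemma~3.4, using definability of $p_0^{\pm}$ and uniqueness of extensions to $M^{ext}$), and using definability of $p_0^{\pm}$ one checks that the set of types satisfying the heir-coupled criterion is closed. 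By contrast, your strategy of pulling Theorem~1.1 down from the saturated model and then pushing through restriction maps also doesn't match the paper's route (the global proof uses a weak-orthogonality argument unavailable over small models, and the transfer lemmas the paper proves in Section~5 go from $S_G(M_0)$ down to $S_G(M)$, not from $S_G(\bar M)$ down to $S_G(M_0)$), but that is a secondary issue next to the missing heir condition. Your outline of the $S_G(M_0)\to S_G(M)$ step is broadly in the right spirit (weak generic types lift, almost periodic types restrict), though the paper does not need honest definitions for it; Lemmas 5.1 and 5.2 use the ideal of non--weak-generic sets and the fact that projecting a minimal subflow gives a minimal subflow.
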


Note that if the underlying set of the $o$-minimal structure $M$ is $\R$ then $M^{ext}$  coincides with $M$ (by \cite{Marker-Steinhorn}). Bearing in mind the above positive results, the next result gives a ``minimal counterexample". 

\begin{Theorem} Let $M =\R$ be the standard model of $RCF$ (i.e. the ordered field of real numbers). Let $G = S^{1}\times (\R,+)^{2}$. Then working either in $S_{G}(\R)$, or in $S_{G}({\bar M})$ for $\bar M$ a saturated model, the set of weakly generic types properly contains the set of almost periodic types. 
\end{Theorem}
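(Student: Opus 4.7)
By the result of Newelski recalled in the introduction, the weakly generic types in the relevant type space (whether $S_G(\R)$ in the local case or $S_G(\bar M)$ in the global) coincide with the Stone-topology closure of the almost periodic types. Hence the task reduces to producing an explicit type $p$ which lies in the closure of the almost periodics but is not itself almost periodic; equivalently, its $G$-orbit closure $\overline{G\cdot p}$ properly contains some minimal subflow.

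My first step is to classify the ``uncoupled'' minimal subflows. Pushing a minimal subflow of $S_G(\R)$ forward along the projections $S_G \to S_{S^1}$ and $S_G \to S_{\R^2}$ shows that it must project onto the unique minimal subflow of $S_{S^1}(\R)$ (the set $\{\psi^+,\psi^- : \psi \in S^1\}$ of one-sided neighbour types of standard angles) and onto a fixed point of the $\R^2$-action on $S_{\R^2}(\R)$ (an $\R^2$-invariant type $r$). Thus for each $\R^2$-invariant $r$ the set $M_r = \{(\psi^\pm, r) : \psi \in S^1\}$ is a minimal subflow, and these $M_r$ are the visible almost periodic types of product form.

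The core of the argument is to build a coupled type $p \in S_G(\R)$ using a definable algebraic map $f : \R \to S^1$, for instance the stereographic projection $f(t) = ((1 - t^2)/(1 + t^2),\, 2t/(1 + t^2))$. I would take $p$ to be the complete type of a triple $(\theta, u, v)$ in which $u$ and $v$ are positive infinite, $v/u$ is infinitesimally close to some real parameter $m$, and $\theta$ lies infinitesimally on a prescribed side of $f(v/u)$ in $S^1$. Critically $p$ must contain only the ``soft tube'' relations $\{|\theta - f(v/u)| < \varepsilon\}$ for all standard $\varepsilon > 0$ together with sign data, and not the exact equation $\theta = f(v/u)$, which would be a thin formula and destroy weak genericity. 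Each such tube is generic in $G$ since finitely many $S^1$-translates already cover $G$, so every formula in $p$ is weakly generic. I would then realise $p$ as a Stone-topology limit of almost periodic product types $(\psi_n, r_{m_n}) \in M_{r_{m_n}}$ in which $\psi_n$ is placed infinitesimally above $f(m_n)$ and $m_n$ approaches $m$ along a coordinated net; this exhibits $p$ in the closure of the almost periodics.

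Finally I would verify $p$ is not almost periodic. Using that $f'(t) \to 0$ as $|t| \to \infty$, translating $p$ by some $(0, g_1, g_2) \in G$ that pushes $v/u$ toward an extreme asymptotic direction causes the infinitesimal coupling between $\theta$ and $f(v/u)$ to dissipate in the Stone limit, producing inside $\overline{G\cdot p}$ an uncoupled product type lying in a strictly smaller minimal subflow $M_{r_\infty}$. The local case $S_G(\R)$ and the global case $S_G(\bar M)$ are handled together because $M = \R$ is Dedekind complete, so $M^{\mathrm{ext}} = M$ by Marker--Steinhorn, as cited in the excerpt. The main technical hurdle is calibrating the coupling in $p$ tightly enough that $p$ does not itself lie in any $M_r$, but loosely enough (all soft, no exact equations) that weak genericity is preserved; threading this needle is where the use of $\dim(H) = 2$ becomes essential, since the one-dimensional analogues from the positive results leave no room for a nontrivial direction parameter $m$ to couple with the $S^1$-factor.
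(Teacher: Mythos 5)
Your general strategy---produce a type whose $C$-component is ``coupled'' to the $H$-component so tightly that almost periodicity fails, while keeping every individual formula weakly generic---is indeed the right strategy, and is essentially what the paper does. But the execution has several genuine gaps, two of them fatal.

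First, your characterization of the almost periodic types is not the right one, and this matters. You describe the minimal subflows as ``uncoupled'' products $M_r = \{(\psi^\pm, r)\}$ with $r$ an $\R^2$-invariant type, and then look for a $p$ not of that form. But in a product $G = C\times H$ the correct criterion (Lemma 3.3 of the paper) is: $tp(c,h/M)$ is almost periodic iff $tp(c/M)$ is generic for $C$, $tp(h/M)$ is $H(M)$-invariant, \emph{and} $tp(c/M,h)$ is finitely satisfiable in $M$ (equivalently $tp(h/M,c)$ is the heir of $tp(h/M)$). An almost periodic type may therefore exhibit ``soft'' coupling between the $C$- and $H$-parts; it need not be a genuine product. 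This means your ``soft tube'' type---built from only the formulas $|\theta - f(v/u)| < \varepsilon$ for \emph{standard} $\varepsilon$ plus sign data---is in fact finitely satisfiable over $M,u,v$ (each such tube around $f(v/u) \approx f(m)\in\R$ is realized by plenty of reals), hence \emph{is} almost periodic by the criterion. You would need to include formulas placing $\theta$ at \emph{infinitesimal} distance from $f(v/u)$ (distance bounded by a term in $u,v$ that is infinitesimal), and then the real difficulty is proving those formulas are still weakly generic. Your proposal explicitly excludes exactly the formulas it needs.

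Second, your mechanism for disproving almost periodicity is broken. You propose translating by $(0,g_1,g_2)\in G(\R)$ to ``push $v/u$ toward an extreme asymptotic direction.'' But with $u$ infinite and $g_1,g_2$ standard reals, $(v+g_2)/(u+g_1)$ is infinitesimally close to $v/u$; real translations fix the direction up to infinitesimals. So the orbit closure cannot be driven to a different direction this way, and the claimed uncoupled limit type $M_{r_\infty}$ never appears. The paper sidesteps all of this by choosing $h = (b_1, b_2)$ with $b_2/b_1 = a$ \emph{infinite} (not tending to a real slope $m$), taking $d$ with $0 < d < (b_1/b_2)^n$ for all $n$, and invoking Lemma 3.3 directly: $tp(d/\R,b_1,b_2)$ contains $0 < x < b_1/b_2$ which no real satisfies, so it is not finitely satisfiable, so $p$ is not almost periodic---no limit argument needed. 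Weak genericity is then proved formula-by-formula by bounding each defining formula from below by a product of a weakly generic $H$-formula and a generic $C$-formula. That calibration---hard coupling at the level of the full type, soft at the level of each formula---is precisely the needle you identify but do not thread.
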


\vspace{5mm}
\noindent

Finally, working again in the $o$-minimal, definably amenable setting,  we prove that the restriction of a minimal flow to a smaller model is also a minimal flow. This question was raised in an early draft of \cite{Chernikov-Simon}. Pierre Simon recently told us that he had solved it in the general $NIP$ context. Nevertheless we include our proof in the $o$-minimal case, as it is an easy consequence of our set-up. 

\begin{Theorem} Suppose $T$ is $o$-minimal, and $G$ is a definably amenabnle group definable over $M$. Let $M\prec N$, let $\pi:S_{G}(N) \to S_{G}(M)$ be the canonical restriction map. Let ${\cal M}$ be a minimal $G(N)$-subflow of $S_{G}(N)$, then $\pi({\cal M})$ is a minimal $G(M)$-subflow of $S_{G}(M)$. 

\end{Theorem}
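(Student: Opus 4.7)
The plan is as follows. First, at the abstract level, $\pi: S_G(N) \to S_G(M)$ is continuous and is $G(M)$-equivariant under the inclusion $G(M) \hookrightarrow G(N)$, so $\pi({\cal M})$ is a closed (by compactness of ${\cal M}$) $G(M)$-invariant subset of $S_G(M)$, i.e.\ a $G(M)$-subflow; the task is to prove minimality. It suffices to show, for an arbitrarily fixed $p \in {\cal M}$, that $\overline{G(M)\cdot \pi(p)} = \pi({\cal M})$, since as $p$ varies this yields density of every orbit in $\pi({\cal M})$.

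For the nontrivial inclusion, let $q \in \pi({\cal M})$, let $[\phi]$ be a basic clopen neighborhood of $q$ with $\phi(x)\in L(M)$, and write $q = \pi(p')$ with $p' \in {\cal M}$. Since ${\cal M}$ is $G(N)$-minimal, $p' \in \overline{G(N)\cdot p}$, so some $g \in G(N)$ lies in $[\phi]\cap(G(N)\cdot p)$, i.e.\ $\phi(g\cdot x) \in p$. The heart of the argument is to replace $g$ by an element of $G(M)$, and for this I would invoke the $M$-invariance of $p$: applied to the $L(M)$-formula $\phi(y\cdot x)$, this says that
\[
Z := \{y \in G : \phi(y\cdot x) \in p\}
\]
is an $M$-definable subset of $G$. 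Since $g \in Z(N)$, elementarity $M \prec N$ produces some $h \in Z(M)$. Then $\phi(h\cdot x) \in p$, i.e.\ $\phi \in h\cdot \pi(p)$, placing $h\cdot \pi(p)$ in $[\phi]\cap G(M)\cdot \pi(p)$, as desired.

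The only real obstacle, and the only step actually using the $o$-minimal and definably amenable hypothesis, is the $M$-invariance of $p$ for $p$ in a minimal $G(N)$-subflow. This is what makes the theorem an ``easy consequence of our set-up'': in the $NIP$ definably amenable world of the paper, points of minimal subflows are almost periodic hence $f$-generic, and $f$-generic types of a group defined over $M$ are invariant over $M$, by the Chernikov--Simon / Hrushovski--Pillay machinery for $NIP$ definably amenable groups that is used throughout this paper. With this invariance granted, the rest of the argument is a purely formal transfer via $M \prec N$ and requires no further input specific to $o$-minimality.
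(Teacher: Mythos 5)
Your reduction to showing $\overline{G(M)\cdot\pi(p)}=\pi({\cal M})$ is sound, and the attempt to transfer $g\in G(N)$ down to some $h\in G(M)$ is a natural idea, but the mechanism you propose has a genuine gap at the crucial step. You assert that an almost periodic $p\in S_G(N)$, for $G$ defined over $M$, is ``invariant over $M$'', and then that $Z=\{y : \phi(y\cdot x)\in p\}$ is $M$-definable. Neither is correct in general. Almost periodic implies weakly generic implies $f$-generic (Fact 1.14(i)), but $f$-generic does \emph{not} imply strongly $f$-generic: Fact 1.14(iii) records exactly that there are $f$-generic types which are not invariant over any small model, and that distinction is central to this paper. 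Moreover, even $Aut(N/M)$-invariance of $p$ would only make $Z$ an $Aut(N/M)$-invariant subset of $G(N)$; upgrading that to $M$-definability (which is what your appeal to $M\prec N$ actually needs) requires $N$ to be saturated over $M$, which is not a hypothesis of the theorem. Already for $C=S^1$ and $M=\R$ one sees the problem: generic types of $C$ over a model $N\succ\R$ are finitely satisfiable in $\R$ but, being non-realized in an $o$-minimal theory, cannot also be definable over $\R$; so for such $p$ the sets $d_{p,N}(\phi)$ genuinely need parameters from $N\setminus M$, and your elementarity step has nothing to latch onto.

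The paper's proof is quite different and avoids any invariance claim over $M$. Lemma 5.3 shows that the restriction $q|M$ of an almost periodic $q\in S_G(N)$ is almost periodic, but via the criterion of Remark 1.11(v) (left-genericity of $d_{q|M,M}(\phi)$): one reduces, using $H$-invariance of $q$ and hence of $q|M$, to genericity of the image $\pi(d_{q|M,M}(\phi))$ in $C(M)$, and this is pulled down from $N$ because $C$ is $fsg$ and restrictions of generics of $C$ stay generic. This alone only shows $\pi({\cal M})$ is contained in the set of almost periodic types --- a priori a union of several minimal subflows --- and Proposition 5.4 then does the real work of showing $\pi({\cal M})$ is a single minimal subflow, using the $*$-semigroup structure, idempotents in $S_C$ via Lemma 1.21(iv),(v), and a careful chain of restriction maps among $S_G(N^{ext})$, $S_G(N)$, $S_G(M^{ext})$, $S_G(M)$. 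None of this reduces to a purely formal transfer along $M\prec N$.
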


\vspace{2mm}
\noindent
For the rest of this introductory section we give precise definitions and background. In fact we will give a bit more than is strictly needed for proving the main results of the paper. So this introductory section may be considered as a partial survey of the state-of-the-art in the subject.

\vspace{5mm}
\noindent
$T$ will denote a complete theory, $M, N$
.. models of $T$ and ${\bar M}$ a saturated model. If  $X$ is a definable set (in particular a definable group $G$), defined over the model $M$ then we write $S_{X}(M)$ for the space of complete types concentrating on $X$.  We identify $X$, $G$,.. with points in $\bar M$.  We use freely basic notions of model theory such as definable type, heir, coheir,.... The book \cite{Poizat} is a possible source. 

\subsection{Topological dynamics}
Topological dynamics is the study of topological (often discrete) groups via their actions on compact spaces. References for the basic theory are \cite{Auslander} and \cite{Glasner}. In \cite{GPPII}, the first author together with Gismatullin and Penazzi, influenced by \cite{Newelski1},  tried to {\em adapt} this theory to the category of definable (in some given structure $M$) groups, in place of discrete groups, in the sense of identifying universal minimal ``definable" flows etc.    Although relevant to the current paper, knowledge of this ``theory" from \cite{GPPII} is not strictly required. 

We start with some basic definitions, at a suitable level of generality. Let $G$ be a group, $X$ a compact Hausdorff space and $G\times X \to X$ an action of $G$ on $X$ by homeomorphisms.  So the action $G\times X \to X$ is continuous when $G$ is equipped with the discrete topology. 
We will be assuming that there is a dense orbit, although it is not always required.   We call $X$ a $G$-flow, and by a subflow we mean a closed $G$-invariant subset of $X$. Minimal subflows of $X$ exist (as an intersection of subflows is also a subflow), and a key notion is that of an almost periodic point:
\begin{Definition} $x\in X$ is said to be {\em almost periodic} if the closure $cl(G.x)$ of the orbit $G.x$ of $x$ under $G$ is a minimal subflow of $X$.  Equivalently $x\in X$ is almost periodic if $x$ is in some minimal subflow of $X$. 
\end{Definition} 

The notion ``generic" is well-known in topological dynamics where it goes under the name syndetic, whereas the notion weak generic was introduced in \cite{Newelski1} 

\begin{Definition} (i) A subset $Y$ of $X$ is {\em generic}  if finitely many $G$-translates cover $X$.
\newline
(ii) A subset $Y$ of $X$ is {\em weakly generic} if for some finite union $Z$ of $G$-translates of $Y$, $X\setminus Z$ is non generic.
\newline
(iii) A point $x\in X$ is {\em generic} if every open neighbourhood $U$ of $x$ is generic.
\newline
(iv) Likewise a point $x\in X$ is weak(ly) generic if every open neighbourhood of $x$ is weakly generic. 
\end{Definition} 

The following summarizes the relationship between these notions. It is taken from \cite{Newelski1}, although (ii) is well-known. 

\begin{Lemma} (i) The set of weak generic points of $X$ is precisely the closure of the set of almost periodic points.
\newline
(ii) If there is a generic point in $X$ then there is a unique minimal subflow of $X$ which moreover coincides with the set of generic points. So also generic points, almost periodic pioints, and weak generic points coincide. 
\end{Lemma}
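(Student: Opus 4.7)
For part (i), I would split the equivalence into its two inclusions. The inclusion ``closure of the almost periodic points $\subseteq$ weakly generic points'' rests on two observations. First, every almost periodic $x$ is weakly generic: writing $M:=cl(G.x)$ for the minimal subflow containing $x$ and taking any open $U\ni x$, minimality gives $M\subseteq\bigcup_{g\in G}g^{-1}U$, and compactness of $M$ yields finitely many $g_1^{-1}U,\ldots,g_n^{-1}U$ covering $M$. Setting $Z=\bigcup_i g_i^{-1}U$, every translate $hZ$ still contains $M$, so no finite family of translates of $X\setminus Z$ can cover the nonempty set $M$; hence $X\setminus Z$ is non-generic, making $U$ weakly generic. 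Second, the set of weakly generic points is closed: any open neighborhood of a point $x$ in this closure contains a weakly generic $y$, and being itself a neighborhood of $y$, is weakly generic.

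For the harder inclusion I would show that every weakly generic open $U$ contains an almost periodic point. The key trick is to shrink $U$ first, using the regularity of the compact Hausdorff space $X$: pick open $U'$ with $x\in U'\subseteq\overline{U'}\subseteq U$. Then $U'$ is still weakly generic; fix a witness $Z'=\bigcup_{i=1}^n g_i U'$ with $X\setminus Z'$ non-generic. Unpacking non-genericity yields the finite intersection property $\bigcap_{g\in F}gZ'\neq\emptyset$ for every finite $F\subseteq G$, so by compactness $K:=\bigcap_{g\in G}\overline{gZ'}$ is a nonempty closed $G$-invariant set. Any minimal subflow $M\subseteq K$ then satisfies
\[
 M \subseteq \overline{Z'} \;=\; \bigcup_i g_i\overline{U'} \;\subseteq\; \bigcup_i g_i U,
\]
so some $y\in M$ lies in some $g_i U$, and $g_i^{-1}y\in M\cap U$ is the desired almost periodic point. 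The main obstacle is precisely the passage from $\overline{Z'}$ into $\bigcup_i g_i U$; without the shrinking step one only gets $M\subseteq\overline{Z}$, which could be confined to $\partial Z$ and miss $U$ entirely.

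For part (ii), let $x_0\in X$ be a generic point. For any $y\in X$ and any open $V\ni x_0$, genericity of $V$ provides $g_1,\ldots,g_n$ with $\bigcup_i g_i V=X$, so some $g_i^{-1}y\in V$; therefore $x_0\in cl(G.y)$ for every $y\in X$. Applied with $y$ in any minimal subflow $M$, this gives $cl(G.x_0)\subseteq cl(G.y)=M$, and since the left side itself contains a minimal subflow we must have equality, which simultaneously forces uniqueness of $M$ and places $x_0$ inside it. Running the same argument in the other direction, every $y\in M$ is generic: an open neighborhood $V\ni y$ contains some $g x_0$ (since $y\in cl(G.x_0)$), so $g^{-1}V$ is a generic neighborhood of $x_0$ whose $G$-translates cover $X$, and hence so do those of $V$. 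Thus the set of generic points is exactly $M$; combining with (i), and using that $M$ is closed, gives generic $=$ almost periodic $=$ weakly generic $=M$.
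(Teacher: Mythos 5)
The paper cites this as Lemma 1.7, attributing it to Newelski's paper \cite{Newelski1} without reproducing a proof, so there is no in-paper argument to compare against; your proposal is a self-contained verification and it is essentially correct. Both directions of (i) and the argument for (ii) hold up: showing almost periodic implies weak generic via minimality and compactness of $M = cl(G.x)$; the observation that the set of weak generic points is closed; the finite-intersection/compactness construction of the nonempty closed invariant set $K = \bigcap_{g\in G}\overline{gZ'}$; the shrinking trick $x\in U'\subseteq\overline{U'}\subseteq U$ to pass from $M\subseteq\overline{Z'}$ to $M\cap U\neq\emptyset$ (correctly flagged as the main obstacle); and the density argument in (ii) characterizing the unique minimal subflow as the generic points. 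One small phrasing issue: you announce the harder inclusion of (i) as ``every weakly generic open $U$ contains an almost periodic point'' and then assert that the shrunk $U'$ is still weakly generic, but this does not follow from weak genericity of the set $U$ alone. It does follow, and is all you actually use, when $x$ is a weak generic \emph{point}: then every open neighborhood of $x$, in particular $U'$, is weakly generic by definition. Rephrasing the target as ``every open neighborhood of a weak generic point contains an almost periodic point'' makes the step immediate and suffices for (i).
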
 

\subsection{Model-theoretic context} 
We consider a complete theory $T$, model $M$ of $T$, group $G$ definable over $M$ and the action of $G(M)$ on the (profinite) type-space space $S_{G}(M)$, on the left say:  $gp = tp(ga/M)$ where $a$ realizes $p$.  This is an action by homeomorphisms and there is a dense orbit, namely $G(M)$ itself (considered as a subset of $S_{G}(M)$). Hence the definitions and results of  the previous subsection apply.  The Boolean algebra of definable subsets of $G(M)$ is naturally isomorphic to the Boolean algebra of clopen subsets of $S_{G}(M)$. For $Y$ a definable subset of $G(M)$, let $[Y]$ be the corresponding clopen.

\begin{Remark} Let $Y$ be a definable subset of $G(M)$. Then
\newline
(i) $[Y]$ is generic in the sense of Definition 1.6 if $Y$ is left generic in the usual model-theoretic sense, namely finitely many left $G(M)$-translates of $Y$ cover $G(M)$.
\newline 
(ii) $[Y]$ is weakly generic in the sense of Definition 1.6 iff there is a finite union $Z$ of left $G(M)$-translates of $Y$ such that $G(M)\setminus Z$ (a definable subset of $G(M)$) is not left generic. 
\end{Remark} 

So we take the right hand side in (ii) to be the {\em definition} of a definable subset $Y$ of $G(M)$ being weakly (left) generic, relative to $M$.  An equivalent definition is: for some definable non (left) generic subset $Y'$ of $G(M)$, $Y\cup Y'$ is (left) generic.  Let us remark that if $Y = Y({\bar M})$ is a definable subset of $G = G({\bar M})$, defined over $M$, and $Y(M)$ is weakly generic (with respect to $M$) then $Y({\bar M})$ is weakly generic with respect to ${\bar M}$. But the converse need not be the case. Anyway for $Y$ a definable subset of $G$ we will say that $Y$ is {\em globally weakly generic} if it is weakly generic with respect to ${\bar M}$.

\begin{Remark} (i) $p\in S_{G}(M)$ is generic  in the sense of Definition 1.6 iff every definable set (formula)  in $p$ is (left) generic.
\newline
(ii) Likewise $p\in S_{G}(M)$ is weak generic in the sense of 1.6 iff every definable set (formula) in $p$ is weakly (left)  generic (with respect to $M$). 
\end{Remark}

In  section 3 of \cite{Newelski1} two examples, coming essentially from topological dynamics, are given where weak generic types do not coincide with almost periodic types.  

\vspace{5mm}
\noindent
By an {\em externally definable} subset of $G(M)$ we mean the trace on $G(M)$ (i.e. intersection with $G(M)$) of a definable, with parameters, subset of $G = G({\bar M})$. We will denote the Boolean algebra of externally definable subsets of $G(M)$ by $B_{ext}(G(M))$. Let  $S_{G,ext}(M)$ denote the Stone space of $B_{ext}(G(M))$. Let $M^{ext}$ be the 
expansion of $M$ obtained by adding predicates for all externally definable sets in $M$. Then $S_{G,ext}(M)$ is simply the collection of complete quantifier-free types over $M^{ext}$. 
When $T$ has $NIP$ then $Th(M^{ext})$ has quantifier elimination whereby $S_{G,ext}(M)$ coincides with the space $S_{G}(M^{ext})$ of 
complete types over $M^{ext}$ concentrating on $G$. But we will not be assuming $NIP$ for now.  Now always ($NIP$ or no $NIP$) the space $S_{G,ext}(M)$ is naturally homeomorphic 
to the space $S_{G,M}({\bar M})$ of global complete types concentrating on $G$ which are finitely satisfiable in $M$: If $p(x)\in S_{G,ext}(M)$, $X$ is a definable (with parameters) 
subset of $G({\bar M})$ and $X\cap G(M)\in p$ then put $X\in p'$,  to obtain $p'\in S_{G,M}({\bar M})$.  Conversely if $p'\in S_{G,M}({\bar M})$ let $p$ be the set of $X\cap G(M)$ for $X\in p'$.

\vspace{2mm}
\noindent
  In any case, whatever the point of view,  we have the natural action of $G(M)$ on the space $S_{G, ext}(M)$, also by homeomorphisms, and Remarks 1.8 and 1.9, as well as appropriate definitions of (weak) genericity for externally definable subsets of $G(M)$,  remain valid in this context. 

\vspace{2mm}
\noindent
\begin{Definition} (i) For $p\in S_{G}(M)$, and $X$ a definable subset of $G(M)$, let $d_{p,M}(X) = \{g\in G(M): X\in gp\}$.
\newline
(ii) For $p\in S_{G,ext}(M)$ and $X$ an externally definable subset of $G(M)$, let $d_{p,M}(X)  = \{g\in G(M): X\in gp\}$
\end{Definition}

\begin{Remark} (i) Let $p\in S_{G}(M)$ and let $X$ be a definable subset of $G(M)$, defined by formula $\phi(x)$ (over $M$).  Realize $p$ by $a\in G({\bar M}$. Then 
$d_{p,M}(X)  = \{g\in G(M): {\bar M}\models \phi(ga)\}$, hence is externally definable.
\newline
(ii) Likewise if $p\in S_{G,ext}(M)$ and $X$ is an externally definable subset of $G(M)$, then $d_{p,M}(X)$ is an externally definable subset of $G(M)$.
\newline
(iii) Suppose $X$ is a definable subset of $G(M)$, $p\in S_{G,ext}(M)$ and $p_{0}\in S_{G}(M)$ is the restriction of $p$ to definable subsets of $G(M)$. Then $d_{p,M}(X) = d_{p_{0},M}(X)$. 
\newline
(iv) Let $M_{1}\prec M_{2}$ be models of $T$, let $q\in S_{G}(M_{2})$, and $p = q|M_{1}$. Let $\phi(x)$ be a formula over $M_{1}$. Then $d_{p,M_{1}}(\phi(M_{1}) = d_{q,M_{2}}(\phi(M_{2})) \cap G(M_{1})$. 
\newline
(v) Let $p\in S_{G}(M)$ or $S_{G,ext}(M)$. Then $p$ is almost periodic iff for each $X\in p$, $d_{p,M}(X)$ is left generic (finitely many left $G(M)$-translates cover $G(M)$). 

\end{Remark}
\begin{proof} (i) to (iv) are clear. And (v) is contained in the proof of Fact 4.3 of \cite{CPS}. 
\end{proof}

We now recall the semigroup stucture $*$ on $S_{G,ext}(M)$.  There are various equivalent constructions. Here is one of them.  Let $p,q\in S_{G, ext}(M)$ identified as above with global types $p'$, $q'$  in $S_{G,M}({\bar M})$. Let $b$ realize $q'$ (in a saturated  elementary extension of ${\bar M}$) and let $b$ realize the unique extension of $p'$ over ${\bar M},b$ which is finitely satisfiable in $M$. Then $tp(a\cdot b/{\bar M})$ is finitely satisfiable in $M$, so is in $S_{G,M}({\bar M})$ and $p*q$ is defined to be the corresponding type in $S_{G,ext}(M)$. 

\begin{Lemma} Let $p,q\in S_{G,ext}(M)$, and let $X$ be an externally definable subset of $G(M)$. Then $X\in p*q$ iff $d_{q,M}(X) \in p$. 
\end{Lemma}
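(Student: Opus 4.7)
The plan is to unpack both sides of the claimed equivalence by moving to the global picture via the identification $S_{G,ext}(M) \cong S_{G,M}(\bar M)$, and then to exploit finite satisfiability of the coheir extension used in the definition of $*$. Fix representatives: let $p',q' \in S_{G,M}(\bar M)$ correspond to $p,q$, choose $b \models q'$ in an elementary extension of $\bar M$, let $a$ realize the unique coheir extension of $p'$ over $\bar M b$, so that $p*q$ corresponds to $\operatorname{tp}(ab/\bar M)$. Pick a formula $\phi(y)$ over $\bar M$ such that $\phi(G(M))=X$; then $X \in p*q$ is equivalent, in the global picture, to $\models \phi(ab)$.

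Next I would give a workable representative for $d_{q,M}(X)$. Unwinding the action in the global picture, for $g\in G(M)$ one has $X \in gq$ iff $\phi(y)\in \operatorname{tp}(gb/\bar M)$, iff $\models \phi(gb)$. Hence $d_{q,M}(X)=\{g\in G(M):\models \phi(gb)\}$, which is \emph{a priori} defined by the formula $\phi(y\cdot b)$ over $\bar M b$. To interpret ``$d_{q,M}(X)\in p$'' inside $p'\in S_{G,M}(\bar M)$, I need a formula $\chi(y)$ over $\bar M$ with $\chi(G(M))=d_{q,M}(X)$. Such $\chi$ exists by saturation of $\bar M$: the partial type in $y$-parameters expressing ``$\phi(m\cdot b)\leftrightarrow \psi(m,\cdot)$ for every $m\in G(M)$'' (where $\psi$ is any defining schema for the externally definable set) is consistent and thus realized in $\bar M$. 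With such a $\chi$ fixed, $d_{q,M}(X)\in p$ is equivalent to $\chi(y)\in p'$, i.e., to $\models \chi(a)$.

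It remains to prove $\models \phi(ab) \iff \models \chi(a)$, and this is where the coheir property does the work. Suppose $\models \phi(ab)\wedge \neg\chi(a)$. The formula $\phi(y\cdot b)\wedge \neg\chi(y)$ is over $\bar M b$ and is satisfied by $a$, so by finite satisfiability of $\operatorname{tp}(a/\bar M b)$ in $M$ there is $m\in G(M)$ with $\models \phi(m b)\wedge \neg \chi(m)$; but $\models \phi(mb)$ puts $m\in d_{q,M}(X)=\chi(G(M))$, contradicting $\neg\chi(m)$. The symmetric argument, applied to $\chi(y)\wedge \neg\phi(y\cdot b)$, rules out $\models \chi(a)\wedge \neg\phi(ab)$. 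Hence the two conditions coincide, proving the lemma.

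The only non-routine point is the passage from the ``external'' formula $\phi(y\cdot b)$, which lives over $\bar M b$, to a genuinely $\bar M$-definable witness $\chi(y)$ with the same trace on $G(M)$; this is handled purely by saturation, and once it is in place the coheir/finite-satisfiability argument is a clean two-line swap.
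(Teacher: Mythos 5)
Your proof is correct and follows essentially the same route as the paper's: identify $p,q$ with the global finitely satisfiable types $p',q'$, realize $q'$ by $b$ and then $p'$ by a coheir $a$ over $\bar M b$, use saturation of $\bar M$ to represent $d_{q,M}(X)$ by an $\bar M$-formula $\chi$ with the same trace on $G(M)$ (the paper simply cites Remark 1.11(ii) for this), and then derive the equivalence $\models\phi(ab)\Leftrightarrow\models\chi(a)$ from finite satisfiability of $tp(a/\bar M,b)$ in $M$. The paper leaves the final contradiction implicit and says ``the converse is similar''; you spell out both, which is the only real difference.
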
 
\begin{proof} Note first that by Remark 1.11  $d_{q,M}(X)$ is an externally definable subset of $G(M)$ so the right hand side of the conclusion makes sense. 
Suppose first that $X\in p*q$.  Let $p',q'\in S_{G,M}({\bar M})$ correspond to $p,q$ respectively as discussed above.  
Let $a'$, $b'$ realize $p',q'$ such that $tp(a'/b',{\bar M})$ is the unique extension of $p'$ which is finitely satisfiable in $M$.  And let $X = Y\cap G(M)$ for $Y$ a definable subset of $G({\bar M})$.  So $Y\in tp(a\cdot b/{\bar M})$.  Namely $a\cdot b\in Y$ so $b\in a^{-1}\cdot Y$. We claim that $\{a'\in G(M):a'^{-1}X\in q\}$ is in $p$. Otherwise the negation is in $p$ and we get a contradiction.The converse is similar. 
\end{proof} 

\subsection{ $NIP$ and definable amenability}
Not all of this section is required for the main results but it puts these results in context and also relates notions such as weak generic types to notions from stability theory.  We also solve here positively the ``weak generic = almost periodic" question in some easy cases. 

We first recall the stability-theoretic notion of dividing: A type $p(x)\in S(B)$ divides over a set $A\subseteq B$ if there is a formula $\phi(x,b)\in p$ and infinite $A$-indiscernible sequence $(b= b_{0},b_{1}, b_{2},....)$ such that $\{\phi(x,b_{i}):i<\omega\}$ is inconsistemt. 

Recall also the notion of a Keisler measure over $M$ on $X$ where $X$ is a definable set, definable over $M$: it is precisely a finitely additive probability measure on the Boolean algebra of definable, over $M$, subsets of $X$.  When we choose $M = {\bar M}$ we call it a global Keisler measure on $X$. 
The definable group $G$ is said to be {\em definably amenable} if there is a global (left) $G$-invariant  Keisler measure on $G$.    By \cite{NIPI} this is equivalent to the existence of a $G(M)$-invariant Keisler measure over $M$ on $G$, whenever $M$ is a model over which $G$ is defined.  This  is in turn equivalent to the existence of a $G(M)$-invariant Borel probability measure on the space $S_{G}(M)$. In any case, note  that if all subsets of $G(M)$ happen to be definable, then definable amenability of $G$ is equivalent to amenability of $G(M)$ as a discrete group. 

\vspace{5mm}
\noindent
$T$ is said to be (or have) $NIP$ if for any indiscernible sequence $(b_{i}:i<\omega)$, formula $\phi(x,y)$ and $a$, there is an eventual truth value of $\phi(a,b)_{i})$ as $i\to \infty$. 
$o$-minimal theories are $NIP$.

Here are  a few  consequences of $NIP$.

\begin{Fact} Assume that $T$ is $NIP$, $M$ a small model,  and where appropriate that $G$ is a definable group defined over $M$. 
\newline
(i)  (\cite{NIPII})   A global type $p(x)\in S({\bar M})$ does not divide over a (small) model $M$ if and only if $p$ is $Aut({\bar M}/M)$-invariant. 
\newline
(ii) (\cite{NIPI})  Let $M_{0} = M^{ext}$. Then $Th(M_{0})$ has quantifier elimination and $NIP$. 
\newline
(iii) (\cite{NIPI})  $G^{00}_{M}$, the smallest type-definable over $M$ subgroup of $G$ of bounded index, does not depend on the choice of $M$, and we just write it as $G^{00}$. 
\newline
(iv) (\cite{CPS})   $G^{00}$ is the same whether computed in $T$ or in $Th(M^{ext})$.
\newline
(v) (\cite{NIPII})   $G$ is definably amenable iff there exists $p(x)\in S_{G}({\bar M})$ such that for every $g\in G = G({\bar M})$, $gp$ does not divide over $M$.  Following the notation of \cite{Chernikov-Simon} we call  a type $p$ as in the right hand side a (global) strongly $f$-generic, over $M$, type of $G$. 

\end{Fact}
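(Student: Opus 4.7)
The plan is to prove each of the five items in turn; all are standard consequences of NIP and the Shelah-expansion toolkit, so my sketches emphasize the key ideas rather than technical bookkeeping.

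For (i), the easy direction is that if $p$ is $\mathrm{Aut}(\bar M/M)$-invariant and $\phi(x,b) \in p$, then for any $M$-indiscernible sequence $(b_i)$ with $b_0 = b$ each $b_i$ is $\mathrm{Aut}(\bar M/M)$-conjugate to $b$, so $\phi(x, b_i) \in p$ for all $i$ and the set $\{\phi(x, b_i) : i < \omega\}$ is consistent (realized by any realization of $p$). For the converse, given $b \equiv_M b'$, I extract by Ramsey and compactness an $M$-indiscernible sequence $(c_i)$ alternating between copies of $b$ and $b'$; if the truth value of $\phi(x, c_i) \in p$ fails to be constant along the sequence then NIP, combined with indiscernibility, lets me construct from this alternation a dividing pattern for a formula in $p$ over $M$, contradicting non-dividing. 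Hence $\phi(x, b) \in p \iff \phi(x, b') \in p$, which yields $\mathrm{Aut}(\bar M/M)$-invariance.

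For (ii), the proof goes through Shelah's honest definitions: for every formula $\phi(x, b)$ over $\bar M$ one constructs an externally definable approximation $\psi(x) \in B_{ext}(G(M))$ that agrees with $\phi(M, b)$ on any prescribed finite subset, an extraction whose NIP input is (a relativized version of) the $(p,q)$-theorem. This gives quantifier elimination for $Th(M^{ext})$ directly, and preservation of NIP follows because any IP formula in the expansion can be pulled back to an IP pattern in $T$ via the uniform definability of the external predicates.

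For (iii) and (iv), the key tool is a Baldwin--Saxl style result: in an NIP theory, the intersection of any uniformly definable family of subgroups of $G$ coincides with the intersection of a finite subfamily. Thus for $M \prec N$ and $H$ a type-definable over $N$ subgroup of bounded index, applying $\mathrm{Aut}(\bar M/M)$ produces only boundedly many conjugates of each formula cutting out $H$, and their intersection is $M$-invariant, type-definable over $M$, and of bounded index; it therefore contains $G^{00}_M$, giving $G^{00}_M \subseteq H$ and hence $G^{00}_M \subseteq G^{00}_N$, with the reverse inclusion trivial. For (iv), since every externally definable subset of $G(M)$ is the trace of a set $T$-definable over parameters from $\bar M$, every type-definable bounded-index subgroup in $Th(M^{ext})$ unpacks to one in $T$ over additional parameters, yielding the same $G^{00}$.

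For (v), the forward direction starts from a global $G$-invariant Keisler measure $\mu$; by a compactness argument (and the fact that any formula of positive $\mu$-measure has only boundedly many disjoint translates) there is $p \in S_G(\bar M)$ all of whose formulas have positive $\mu$-measure, and $G$-invariance of $\mu$ then forces every translate $gp$ to have the property that no formula in it divides over $M$, so $gp$ does not divide over $M$ by (i). Conversely, from a strongly $f$-generic $p$ I would average the $G$-orbit of $p$ along an invariant mean (obtained via a fixed point in the compact convex space of Keisler measures on $G$) to produce a $G$-invariant global measure. The main obstacle in the list is the converse direction of (i): converting a failure of $\mathrm{Aut}$-invariance into an actual dividing witness under NIP is the delicate heart of the matter, since every subsequent item either follows from honest definitions, uniform bounds on intersections of definable subgroups, or standard averaging, once (i) is in hand.
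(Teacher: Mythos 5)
The paper offers no proof of this statement: it is labelled a \emph{Fact} and each item is simply cited to the literature (\cite{NIPII}, \cite{NIPI}, \cite{CPS}), so there is no in-paper argument to compare yours against. Your sketches of (i) and (ii) are reasonable outlines of the known proofs, with one caveat on (i): the converse does not go by building an indiscernible sequence that ``alternates between copies of $b$ and $b'$'' (all terms of an $M$-indiscernible sequence already have the same type over $M$). The standard route is in two steps: non-dividing implies Lascar-invariance over $M$ (if $b,b'$ lie on a common $M$-indiscernible sequence and $\phi(x,b)\wedge\neg\phi(x,b')\in p$, indiscernibility plus non-dividing of the alternation formula produces a single realization alternating on $\phi$ infinitely often, contradicting $NIP$), and then one uses that over a \emph{model} equality of types implies equality of Lascar strong types, via a coheir sequence. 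Your sketch conflates these two steps.

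There are two genuine gaps. In (iii), the assertion that ``applying $\mathrm{Aut}({\bar M}/M)$ produces only boundedly many conjugates of each formula cutting out $H$'' is false in general: a formula over $N$ can have unboundedly many $M$-conjugates, and the entire content of the theorem is that the intersection of all these conjugates (which is $M$-invariant, hence type-definable over $M$) still has \emph{bounded index}. That is exactly where the $NIP$ combinatorics of Shelah's argument enters; Baldwin--Saxl controls finite intersections of a uniformly definable family of \emph{definable} subgroups and does not by itself bound the index of an intersection of unboundedly many type-definable bounded-index subgroups. As written, your argument assumes the conclusion. In (v), the converse direction is circular: an ``invariant mean'' on the $G$-orbit of $p$, or a fixed point in the space of Keisler measures, is precisely what amenability would provide, and you cannot invoke it to prove definable amenability. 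The actual construction in \cite{NIPII} uses that a strongly $f$-generic $p$ satisfies $G^{00}\subseteq \mathrm{Stab}(p)$, so $g\mapsto gp$ factors through the compact group $G/G^{00}$, and one pushes forward its normalized Haar measure to obtain the desired $G$-invariant Keisler measure. The forward direction of (v) as you describe it (weakly random types for an invariant measure based on a small model, plus the counting argument against $k$-inconsistency) is fine.
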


The recent paper \cite{Chernikov-Simon} gives, among other things, a fairly comprehensive account of the relations betweeen different notions of genericity in the $NIP$ definably amenable context.  Although we will only be using ``easy" directions of their observations, there is no harm in describing some of their results.   Given a definable subset $X$ of $G$, they define $X$ to be $f$-generic if for some/any  model $M'$ over which $X$ is defined any left translate $gX$ of $X$ does not divide over $M'$ (i.e. for some/any model $M'$ over which $X$ is defined and any $M'$-indiscernible sequence $(g_{i}:i<\omega)$, $\{g_{i}X: i< \omega\}$ is inconsistent).  As the notation suggests, the property does not depend on the model $M'$ chosen.  Call a complete type $p$ (over some set of parameters) $f$-generic iff every formula in $p$ is $f$-generic.  The following appears in \cite{Chernikov-Simon}.

\begin{Fact}  ($T$ $NIP$)
\newline
(i) Assume $G$ definably amenable and let $X$ be a definable subset of $G$. Then $X$ is $f$-generic iff $X$ is (globally) weakly generic. In particular for $p\in S_{G}({\bar M})$, $p$ is $f$-generic iff $p$ is weakly generic.
\newline
(ii)   $p\in S_{G}({\bar M})$ is $f$-generic iff $Stab(p) = \{g\in G({\bar M}): gp = p\} = G^{00}$. 
\newline
(iii) Any global  strongly $f$-generic type is $f$-generic, but there are examples of $f$-generic types which are NOT strongly $f$-generic. 
\newline
(iv) $G$ is definably amenable iff there is a global $f$-generic type. 
\end{Fact}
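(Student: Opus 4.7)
The unifying bridge is an equivalence, valid in $NIP$ with $G$ definably amenable, between a definable $X\subseteq G$ being $f$-generic and there existing a global $G$-invariant Keisler measure assigning $X$ positive mass. I would prove this key lemma first. The easy direction runs from measure to $f$-genericity: if $\mu$ is $G$-invariant with $\mu(X)>0$, then no translate $gX$ can divide, because by $NIP$ the dividing of $gX$ would be witnessed by a uniformly $k$-inconsistent indiscernible sequence of translates $(g_iX)$ for some fixed $k$, forcing $\mu(g_iX)\le 1/k$ for all such $k$ by invariance and finite additivity, contradicting $\mu(gX)=\mu(X)>0$. The harder direction uses amenability: given an $f$-generic $X$ and any $G$-invariant $\mu_0$, one applies a weak-$\ast$/Markov--Kakutani compactness argument to the orbit of the characteristic function of $X$ and extracts a $G$-invariant cluster measure under which $X$ has positive mass, using $f$-genericity of $X$ to prevent the mass from escaping to the complement.

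With the lemma in hand, part (i) reduces to two translations: weak genericity of $X$ is equivalent to $X$ having positive mass in some global $G$-invariant Keisler measure (by a standard Hahn--Banach/F{\o}lner-style averaging in the definably amenable setting), and the lemma then identifies this with $f$-genericity. Part (iv) is similar: an $f$-generic type has every formula of positive measure in some invariant measure, and a compactness/Zorn step coherently assembles these into a single global $G$-invariant measure on all of $G$; conversely, any type in the support of such a measure has every formula of positive measure, hence (by the lemma) is $f$-generic.

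For (ii), I would use the $NIP$ fact that $G/G^{00}$ is a compact Hausdorff topological group. If $\mathrm{Stab}(p)=G^{00}$, then the $G$-orbit of $p$ is indexed by $G/G^{00}$, and integrating Dirac point-masses along this orbit against Haar measure on $G/G^{00}$ produces a $G$-invariant Keisler measure giving every formula in $p$ positive mass, whence $p$ is $f$-generic by the lemma. Conversely, if $p$ is $f$-generic, one observes that the orbit $G\cdot p$ is bounded (its closure is a minimal flow supporting an invariant measure), so $\mathrm{Stab}(p)$ is a type-definable subgroup of bounded index, hence contains $G^{00}$; equality follows because $G^{00}$ must act trivially on $p$ (any element moving $p$ would produce two distinct translates both of positive measure in the same invariant measure, contradicting that $G^{00}$ preserves the measure pointwise on $S_G(\bar M)$ via the natural action).

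For (iii), the implication strongly $f$-generic $\Rightarrow$ $f$-generic is immediate from definitions: non-dividing of every translate $gp$ over a fixed $M$ entails non-dividing over $M$ of every translate of every formula in $p$. The non-converse demands a concrete counterexample, for which I would adapt the one in \cite{Chernikov-Simon}: a definably amenable $NIP$ group carrying an $f$-generic type whose parameter behaviour cannot be uniformly absorbed into any single small base model, so that while each formula is individually non-dividing, no single $M$ witnesses strong $f$-genericity of $p$. The main obstacle throughout is the ``$f$-generic $\Rightarrow$ positive-measure'' direction of the key lemma: this is where one must genuinely combine $NIP$ with definable amenability to manufacture an invariant measure from a purely combinatorial non-dividing condition, and it is the technical heart of the Chernikov--Simon framework being summarised here.
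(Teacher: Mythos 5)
This statement is Fact~1.14 in the paper, and the paper gives \emph{no proof} of it: it is introduced with ``The following appears in \cite{Chernikov-Simon}'' and is used as a black box. So there is no in-paper argument to compare your proposal against; what I can do is evaluate the proposal on its own terms and against the source it is implicitly re-deriving.

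Your organizing idea --- characterise $f$-genericity of a definable set $X$ by the existence of a global $G$-invariant Keisler measure giving $X$ positive mass, and funnel (i), (ii), (iv) through that lemma --- is genuinely the Chernikov--Simon strategy, so at the level of architecture you are on the right track. The easy direction of the lemma (positive invariant measure $\Rightarrow$ non-dividing of translates via $NIP$-uniform $k$-inconsistency) is correctly argued. The deduction of (i) from the lemma, and the immediate implication in (iii), are also fine.

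There are, however, concrete gaps in (ii) and (iv). In (ii), the converse direction is circular and contains a false step. You assert that the orbit $G\cdot p$ is bounded ``because its closure is a minimal flow supporting an invariant measure''; but boundedness of the orbit is, by definition, equivalent to $\mathrm{Stab}(p)$ having bounded index, which is precisely what you are trying to prove, and supporting an invariant measure does not by itself bound the cardinality of the orbit (compact spaces and measure supports can be as large as $G(\bar M)$). You also write that $\mathrm{Stab}(p)$ is type-definable; that requires $p$ to be $\mathrm{Aut}(\bar M/M)$-invariant over some small $M$, which an $f$-generic type need not be --- this is exactly the point of Fact~1.14(iii). Finally, the claim that ``$G^{00}$ preserves the measure pointwise on $S_G(\bar M)$'' and therefore cannot move $p$ is simply incorrect: preserving a measure means $\mu(A)=\mu(gA)$, not that $g$ fixes points of the support (an irrational rotation preserves Lebesgue measure yet moves every point). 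The actual argument in \cite{Chernikov-Simon} that $f$-generic implies $\mathrm{Stab}(p)=G^{00}$ is substantially more delicate and cannot be replaced by these steps. In (iv), the direction ``there exists a global $f$-generic type $\Rightarrow$ $G$ is definably amenable'' cannot appeal to your key lemma at all, since the lemma is stated under the hypothesis that $G$ is already definably amenable; you would need an independent construction of an invariant measure out of an $f$-generic $p$ (e.g.\ pushing Haar measure on $G/\mathrm{Stab}(p)$ forward along the orbit map, which in turn depends on first establishing the bounded-index statement of (ii) without circularity).
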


The facts as stated above do not directly settle the question of whether (in our current $NIP$, definably amenable context) if $X$ is a globally weakly generic definable set and $X$ is defined over $M$ we can witness weak genericity with a formula over $M$. But this will follow, at least in the $o$-minimal case from our results in section 5.

For the rest of this section we assume that $T$ has $NIP$. We begin with some elementary observations on the weak generic = almost periodic question.  Fact 1.13(v) and Fact 1.14(iv) explicate definable amenability of $G$ in terms of (strongly) $f$-generic types. It is easy to see that a global   $f$-generic type $p$ is strongly $f$-generic just if $p$ does not divide over  (equivalently is invariant over) some small model  (because then by Fact 1.14 (ii), (iii),  there is a fixed small model over which every translate of $p$ is (automorphism)  invariant, which is enough by 5.11 of  \cite{NIPII}).  Hence there are two extreme cases for a strongly $f$-generic type $p$:
\newline 
(I) $p$ is {\em definable}, and we call $p$ a definable $f$-generic,
\newline
(II) $p$ is finitely satisfiable in some small model, and we call $p$ a finitely satisfiable $f$-generic. 

Noting that weak generic is the same as $f$-generic (Fact 1.14) we  give positive answers to the ``main question" in these two cases.

\begin{Lemma} Suppose that $p\in S_{G}({\bar M})$ is a definable $f$-generic. Then $p$ is almost periodic.
\end{Lemma}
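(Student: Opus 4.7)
The approach is to apply the characterization of almost periodicity in Remark 1.11(v): to show $p \in S_G(\bar M)$ is almost periodic it suffices to show that for every formula $\phi(x) \in p$ the set $D_\phi := d_{p, \bar M}(\phi) = \{g \in G : \phi(gx) \in p\}$ is left generic in $G = G(\bar M)$. Two hypotheses feed in: definability of $p$ makes each $D_\phi$ a \emph{definable} subset of $G$ over $\bar M$, and $f$-genericity of $p$ gives $Stab(p) = G^{00}$ by Fact 1.14(ii).

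The plan is then to transfer the problem to the bounded quotient $G/G^{00}$ under its compact Hausdorff logic topology (Fact 1.13(iii)). Since $gp = hp$ iff $h^{-1}g \in Stab(p) = G^{00}$, the set $D_\phi$ is saturated by left cosets of $G^{00}$; and it contains $e$ (hence all of $G^{00}$) because $\phi \in p = e\cdot p$. Writing $\pi : G \to G/G^{00}$, the image $\pi(D_\phi)$ is a neighborhood of the identity coset, and is \emph{clopen} in the logic topology: both $D_\phi$ and its complement $G \setminus D_\phi$ are definable (hence type-definable) unions of $G^{00}$-cosets.

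The family $\{g \pi(D_\phi) : g \in G\}$ is then an open cover of $G/G^{00}$ by clopen sets (each coset $g G^{00}$ lies in $g\pi(D_\phi)$). Compactness of $G/G^{00}$ produces a finite subcover $g_1 \pi(D_\phi), \ldots, g_n \pi(D_\phi)$, and from $\pi^{-1}(\pi(D_\phi)) = D_\phi$ one reads off $G = g_1 D_\phi \cup \cdots \cup g_n D_\phi$. Hence $D_\phi$ is left generic in $G$, and Remark 1.11(v) yields that $p$ is almost periodic.

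The main thing to verify carefully is the left-coset saturation of $D_\phi$ and the resulting clopenness of $\pi(D_\phi)$ in the logic topology; both genuinely use definability of $p$ (without it, $D_\phi$ would only be externally definable, and the complement argument giving closedness of $\pi(D_\phi)$ from type-definability would fail). No tool beyond compactness of the bounded quotient and the stabilizer computation $Stab(p) = G^{00}$ is needed, consistent with this being treated as an ``easy'' case of the weak-generic versus almost-periodic question.
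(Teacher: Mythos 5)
Your proof is correct, but it takes a genuinely different route from the paper's. The paper proceeds via local $\Delta$-stabilizers: for each finite set $\Delta$ of formulas, $Stab_{\Delta}(p)$ is a \emph{definable} subgroup containing $G^{00}$, hence of finite index; this yields $G^{00}=G^{0}$, and a compactness argument then shows the orbit $G(\bar M)\cdot p$ is already \emph{closed}, which is stronger than almost periodicity and gives additional structural information. Your argument instead goes through the syndetic-return-set characterization of almost periodicity (Remark 1.11(v)): definability of $p$ makes each return set $D_{\phi}=d_{p,\bar M}(\phi)$ a definable subset of $G$, while $Stab(p)=G^{00}$ (Fact 1.14(ii)) makes $D_{\phi}$ a union of left $G^{00}$-cosets containing $G^{00}$; passing to the compact quotient $G/G^{00}$ in the logic topology, $\pi(D_{\phi})$ is a clopen neighbourhood of the identity, and compactness of $G/G^{00}$ yields a finite left-translate cover, i.e.\ $D_{\phi}$ is left generic. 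The two arguments use the same two hypotheses (definability and $Stab(p)=G^{00}$) but at different points: the paper uses definability to make the stabilizers definable and shows closedness of the orbit, whereas you use definability to make $\pi(D_{\phi})$ clopen in the bounded quotient and invoke the topological-dynamical characterization directly. Your route is shorter and more self-contained; the paper's route is more informative (it also shows $G^{00}=G^{0}$ and that the orbit is closed, a fact used implicitly elsewhere in the analysis of minimal flows).
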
 
\begin{proof}  This is an adaptation of standard stability methods. By Fact 1.14,  $Stab(p) = G^{00}$. Let $\Delta$ be a finite set of $L$-formulas $\phi(x,y)$  ($x$ fixed of sort $G$ and $y$ varying).  Let $\Delta^{*}$ be the collection of definable subsets of $G$ of the form $g\phi(x,a)$ for $g\in G$ and $a$ in ${\bar M}$ and $\phi(x,y)\in \Delta$. Let $p|(\Delta^{*})$ be the collection of definable sets from $\Delta^{*}$ which are in $p$. By definability of $p$, $Stab_{\Delta}(p) = _{def} Stab(p|(\Delta^{*})) = \{g\in G({\bar M}):$ for every $X$ in $\Delta^{*}$, $X\in p$ iff $gX\in p\}$ is a definable subgroup of $G$ which contains $G^{00}$, hence, by saturation of ${\bar M}$  has finite index in $G$.  Now clearly $Stab(p)$ is the intersection of the $Stab_{\Delta}(p)$ as $\Delta$ ranges over all finite sets of formulas, so in fact $G^{00} = G^{0}$, the intersection of all definable subgroups of finite index. 

Note that for each finite $\Delta$, there are only finitely many $G({\bar M})$-translates of $p|(\Delta^{*})$, given by the cosets of $Stab_{\Delta}(p)$, and of course each of these is also definable.   We claim that the orbit $G({\bar M})\cdot p$ is already closed (so $p$ is almost periodic).  Let $q\in S_{G}({\bar M})$ with $q\in cl(G({\bar M})\cdot p)$.  It is easy to see that for each finite $\Delta$, $q|(\Delta^{*})$ is one of the finitely many translates of $p|(\Delta^{*})$. In particular $q$ is also a definable type. By compactness we find some $g\in G({\bar M})$ such that $gp = q$.

\end{proof}

If $G$ has some definable global $f$-generic it need not be the case that every global $f$-generic is definable. Nevertheless we would guess that:
\newline
{\em Conjecture.}  If $G$ has a definable $f$-generic then any global $f$-generic (weak generic) type is almost periodic.

\vspace{2mm}
\noindent
A related result is:
\begin{Lemma} Suppose that $G$ is definable over $M$ (where $M$ may be saturated) and $G = G^{00}$.  Then working in $S_{G}(M)$ or $S_{G}(M^{ext})$, any weak generic type is $G(M)$-invariant hence almost periodic. 
\end{Lemma}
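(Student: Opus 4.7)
The plan is to lift any weak generic $p\in S_{G}(M)$ to a global $f$-generic $p^{*}\in S_{G}(\bar M)$. Once such a lift is obtained, Fact 1.14(ii) together with the hypothesis $G=G^{00}$ forces $\mathrm{Stab}(p^{*})=G^{00}=G(\bar M)$, so $p^{*}$ is $G(\bar M)$-invariant, hence $p=p^{*}|M$ is $G(M)$-invariant. The orbit $G(M)\cdot p=\{p\}$ is then a singleton and thus trivially a minimal subflow, so $p$ is almost periodic.

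Preparing for the lift, every formula in $p$ is already $f$-generic: by Remark 1.9(ii), each $\phi(x)\in p$ is weak generic over $M$; by the remark following Remark 1.8, $\phi$ is globally weak generic; and by Fact 1.14(i), $\phi$ is $f$-generic. For the lift itself, let $F\subseteq S_{G}(\bar M)$ denote the set of global $f$-generic types; it is nonempty by Fact 1.14(iv) and closed, being equal to $\bigcap_{\psi\text{ not }f\text{-generic}}[\neg\psi]$. Writing $\pi:S_{G}(\bar M)\to S_{G}(M)$ for the restriction map, we need $p\in \pi(F)$. By compactness this reduces to showing that for any finite conjunction $\phi$ of formulas in $p$, some global $f$-generic type contains $\phi$; and since $\phi\in p$ is itself $f$-generic, the step reduces in turn to the known fact --- drawn from the Chernikov--Simon analysis of $f$-genericity via $G(\bar M)$-invariant Keisler measures --- that every $f$-generic formula extends to a global $f$-generic type.

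The $S_{G}(M^{ext})$ case is analogous: Fact 1.13(ii) gives NIP (and quantifier elimination) for $Th(M^{ext})$, and one may either run the same lifting argument through $S_{G}(\bar M)\to S_{G}(M^{ext})$, or use the identification of $S_{G}(M^{ext})$ with the space of global types finitely satisfiable in $M$ to reduce directly to Fact 1.14(ii). The main technical obstacle in either case is the extendibility of $f$-generic formulas to global $f$-generic types; granted this, compactness and invariance under $G^{00}=G$ finish the argument.
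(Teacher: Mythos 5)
Your proof is correct and follows essentially the same route as the paper: pass from "$p$ weak generic" to "every formula in $p$ is $f$-generic," extend to a global $f$-generic type $p'$ (the non-$f$-generic formulas form an ideal), invoke Fact 1.14(ii) with $G=G^{00}$ to get $\mathrm{Stab}(p')=G$, and conclude $G(M)$-invariance hence almost periodicity. The only difference is one of economy: you unpack the extension step and offer two routes for $S_G(M^{ext})$, whereas the paper disposes of the $M^{ext}$ case in one line by noting $G^{00}$ is unchanged in $Th(M^{ext})$ (Fact 1.13(iv)) so the same argument applies verbatim.
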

\begin{proof} As $G^{00}$ is unchanged in $Th(M^{ext})$ it is enough to work with $S_{G}(M)$. Let $p\in S_{G}(M)$ be weakly generic. So every formula in $p$ is weak generic (with respect to $M$) so $f$-generic, so $p$ extends to a global $f$-generic type $p'$. Then $Stab(p') = G^{00} = G$. Hence $p$ is $G(M)$-invariant, as required. 
\end{proof} 

\vspace{2mm}
\noindent
On the other hand, if $G$ has a global $f$-generic type $p$ of kind (II), then this means precisely that $G$ is $fsg$ in the sense of \cite{NIPI}. In particular any global weak generic ($f$-generic) is generic. We summarize the situation in that case:

\begin{Remark} Suppose $G$ is $fsg$. Then for any model $M$ over which $G$ is defined, including the monster  model ${\bar M}$, in both $S_{G}(M)$ and $S_{G}(M^{ext})$ there is a unique minimal flow, the space of generic types. Hence weak generic = almost periodic in $S_{G}(M)$ or $S_{G}(M^{ext})$
\end{Remark}
{\em Explanation.}  This appears explicitly in Fact 5.5 of \cite{CPS} with references. But the point is just that there {\em exist}  generic types in $S_{G}(M)$ and $S_{G}(M^{ext})$, and then one can use Lemma 1.7 (ii). The existence of generic types in $S_{G}(M)$ is part of the general theory of $fsg$ groups (see \cite{NIPI} and \cite{NIPII}), and it is proved in Theorem 3.19 of \cite{CPS}, that $G$ is also $fsg$ in $Th(M^{ext})$.

\vspace{2mm}
\noindent
 It is also worth remarking here, that  the space of generic types in $S_{G}({\bar M})$ (where ${\bar M}$ is a saturated model of $T$) is homeomorphic to the space of generic types of $S_{G}(M^{ext})$, as  proved in \cite{Pillay-fsg}.  The (natural) homeomorphism $h$  is as follows: given generic type $p\in S_{G}({\bar M})$, let $h(p)$ be axiomatized by $X\cap G(M)$, for $X\in p$. So the point is that $h(p)$ is a complete type over $M^{ext}$, and also determines $p$. 
\newline
{\em Notation.}  Given $v$ a global generic type of $G$  (over the saturated model ${\bar M}$ of $T$), and small submodel $M$ of ${\bar M}$, let $v_{M}$ be the corresponding generic type in $S_{G}(M^{ext})$.

\subsection{$o$-minimality}
We now specialize to the  case where $T$ is $o$-minimal, namely a complete $o$-minimal expansion of the theory $RCF$ of real closed fields.  The reader is referred to \cite{CP} for relevant background on definable groups in $o$-minimal structures as well as further references. 

 If $G$ is a definable group, then $G^{0}$ has finite index, so we often assume $G = G^{0}$ in which case we call $G$ definably connected.  $G$ is equipped with its ``definable topology", and is said to be definably compact \cite{PS} if for any  definable continuous function $f:[0,1)$ to $G$, $lim_{x\to 1}f(x)$ exists and is in $G$. Here $[0,1)$ is the appropriate interval in the appropriate real closed field. 

We have the following decomposition theorem  \cite{CP}, Proposition 4.6:
\begin{Fact} Suppose $G$ is definably connected. Then $G$ is definably amenable iff $G$ has a definably connected torsion-free definable normal subgroup  $H$ such that $G/H$ is definably compact.
\end{Fact}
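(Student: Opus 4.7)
The plan is to prove each direction using the structure theory of definable groups in $o$-minimal structures together with two basic inputs: a definably compact definably connected definable group is definably amenable (in fact $fsg$, as recalled in Remark 1.17), and a definably connected torsion-free definable group in an $o$-minimal structure is solvable (by Peterzil--Starchenko), hence definably amenable by induction on the derived length, since definable abelian groups always admit an invariant Keisler measure.

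For the ``if'' direction, I would prove that the class of definably amenable groups is closed under definable extensions. Given $\mu_H$ a left $H$-invariant Keisler measure on $H$ and $\mu_{G/H}$ the (unique) left-invariant Keisler measure on the definably compact $G/H$, one integrates the translates of $\mu_H$ across cosets against $\mu_{G/H}$ to produce a left $G$-invariant Keisler measure on $G$. This is the definable analogue of the classical fact that an extension of an amenable group by an amenable group is amenable.

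For the converse, assume $G$ is definably amenable and definably connected. Let $R$ be the solvable radical of $G$, whose existence follows from the Hrushovski--Peterzil--Pillay structure theory for definable groups in $o$-minimal structures; then $R$ is a definable, definably connected, normal subgroup of $G$ with $G/R$ semisimple. Within $R$ there is a maximal definably connected torsion-free definable normal subgroup $N$, obtained by induction on dimension using the fact that a definably connected solvable $o$-minimal group has a well-behaved decomposition into a torsion-free piece and a definably compact piece (coming from the structure of definable abelian $o$-minimal groups). Since $N$ is characteristic in $R$ it is normal in $G$, and $R/N$ is definably compact.

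It remains to show that $G/R$ is definably compact; this is the main obstacle. I would establish the lemma that a definably connected semisimple group in an $o$-minimal structure is definably amenable if and only if it is definably compact, by reducing via the Peterzil--Starchenko trichotomy and the Hrushovski--Peterzil--Pillay description of semisimple definable groups as essentially semialgebraic real Lie groups over the real closure of the field sort to the classical fact that a connected semisimple real Lie group is amenable iff it is compact. The delicate point is the transfer between abstract and definable amenability in this semialgebraic setting, but the algebraic rigidity of definable semisimple groups makes the two notions align. Granting the lemma, $G/R$ is definably compact, so setting $H = N$ makes $G/H$ an extension of the definably compact $G/R$ by the definably compact $R/N$, hence itself definably compact.
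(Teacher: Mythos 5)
The paper does not prove this statement; it is cited as Proposition~4.6 of \cite{CP} (Conversano--Pillay), so there is no in-paper proof to match your sketch against. Your outline does essentially reconstruct the route taken in \cite{CP}: reduce to the solvable radical $R$, use that definably connected torsion-free definable groups are solvable (Peterzil--Starchenko), and pivot on the lemma that a definably connected semisimple definable group in an $o$-minimal structure is definably amenable if and only if it is definably compact (proved there via the semialgebraicity of semisimple definable groups and, at bottom, a Borel-density-type argument for noncompact real semisimple groups). So the plan is sound and is the intended one, but a few steps are passed over too quickly.

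First, in the ``if'' direction, integrating $\mu_H$ against $\mu_{G/H}$ is not automatically well-posed: one needs $gH\mapsto\mu_H(g^{-1}X\cap H)$ to be a function on $G/H$ that can actually be integrated against a Keisler measure. In \cite{CP} this is sidestepped by Proposition~4.7, which gives $H$ a global left-invariant \emph{definable type}; taking $\mu_H$ to be the corresponding Dirac measure makes the integrand the indicator of a definable subset of $G/H$, and the construction then goes through cleanly. You should either invoke that, or argue directly that your $\mu_H$ can be taken Borel-definable. Second, you take $N$ maximal among definably connected torsion-free definable normal subgroups of $R$ and assert it is characteristic in $R$; this requires showing that the product of two such subgroups is again one (torsion-freeness of the product is where solvability and the $o$-minimal structure of $1$-dimensional factors enter). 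In \cite{CP} the subgroup $H$ is instead defined directly as the maximal normal definable torsion-free subgroup of $G$ itself, and its uniqueness/canonicity is what makes it characteristic; it then lies inside $R$ automatically. Third, your final step uses that definably compact definable groups in $o$-minimal structures are closed under definable extensions (to conclude $G/N$ is definably compact from $R/N$ and $G/R$ being so); this is true, but it is a real lemma and should be cited or proved, e.g.\ via the Peterzil--Steinhorn curve criterion using a definable section, or via the equivalence with $fsg$. None of these is a fatal flaw, but as written they are gaps.
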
 

In fact $H$ is ``characteristic" from the model-theoretic point of view, being the unique maximal normal definable torsion-free subgroup of $G$: it is defined over $A$ if $G$ is. And Proposition 2.1(i) of \cite{CP} says that $H$ is solvable and there are definable $1 = H_{0} < ... < H_{n} = H$ where each $H_{i+1}/H_{i}$ is $1$-dimensional, torsion-free (definably connected).  Also by Proposition 4.7 of \cite{CP} $H$ has a global left invariant definable ($f$-generic) type, so $H$ fits into one of the extreme cases in the previous section.

On the other hand from \cite{NIPI} we know that the definably compact groups are precisely the $fsg$ groups (in the $o$-minimal context), so they are the other extreme case.

Simon \cite{Simon-distal} has isolated a notion, distality, meant  to express the property that a $NIP$ theory $T$ has ``no stable part", or is ``purely unstable".  Examples include $o$-minimal and ``$p$-minimal" theories. 

\begin{Question} Is there a structure theorem for definably amenable groups $G$  in distal  theories along the lines of Fact 1.18. For example is $G$ an extension of $C$ by $H$ where $C$ is $fsg$ and $H$ has a definable global $f$-generic? 
\end{Question} 

\begin{Remark} Suppose $H$ is torsion-free (definably connected) then $H = H^{00}$.  Hence by Lemma 1.16, weak generic = almost periodic = $H(M)$-invariant, working in $S_{H}(M)$ or $S_{H}(M^{ext})$ for any $M$ over which $H$ is defined. 
\end{Remark}
\begin{proof} Because there is a global left invariant type. 
\end{proof} 

In Section 5.8 of \cite{CPS}, the ``Ellis group conjecture" is proved in the $o$-minimal situation and the analysis  given there will be useful for the current paper.  So we revisit Lemma 5.11 of \cite{CPS}.

Let us fix definably amenable, definably connected $G$, defined over $M$, and let $1\to H\to G\to C \to 1$ be the definable (over $M$) exact sequence from Fact 1.18.  Let $M_{0} = M^{ext}$. Then passing to $Th(M_{0})$, $C$ is still $fsg$ and $H= H^{00}$ and has a global definable left-invariant type.   We now work in $T' = Th(M_{0})$.  Let $M'$ denote a saturated elementary extension of $M_{0}$. Let $\pi:G\to C$. So $\pi$ induces a surjective continuous map which we still call $\pi$ from $S_{G}(M_{0})$ to $S_{C}(M_{0})$.

\begin{Lemma}  Let ${\cal M}(G)$ be a minimal subflow of $S_{G}(M_{0})$, and let ${\cal M}(C)$ be the unique minimal subflow of $S_{C}(M_{0})$. 
\newline
(i) For $p\in {\cal M}(G)$ and $g\in G(M)$, $g\cdot p$ depends only on the coset $g/H\in C$ of $g$, so we write it as $(g/H)\cdot p$, yielding an action of $C(M)$ on ${\cal M}(G)$ (by homeomorphisms). 
\newline
(ii)  $\pi$ induces a homeomorphism between ${\cal M}(G)$ and ${\cal M}(C)$  which respects the actions by $C(M)$, so is an isomorphism of $C(M)$-flows. 
\newline
(iii)  The map $(g/H),p) \to (g/H)\cdot p$ from $C(M)\times {\cal M}(G)$ to ${\cal M}(G)$, extends uniquely to a map $(q,p)\to q*p$ from $S_{C}(M_{0})\times {\cal M}(G)$ to ${\cal M}(G)$,  such that $\pi(q*p) = q*(\pi(p))$ where the $*$ on the right hand side is induced by the usual semigroup structure on $S_{C}(M_{0})$.
\newline
(iv)  For any $p\in {\cal M}(G)$, there is an idempotent $v\in {\cal M}(C)$ such that $v*p = p$ where $*$ is as in (iii). 
\newline
(v) For $p\in {\cal M}(G)$, ${\cal M}(G) = \{q*p: q\in {\cal M}(C)\}$. 
\end{Lemma}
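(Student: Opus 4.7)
The plan is to establish (i)--(v) sequentially, with (i) carrying the essential model-theoretic content and (ii) the key subsequent ingredient; (iii)--(v) then fall out of Ellis-semigroup manipulations transported through the isomorphism of (ii).

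For (i), the claim is that $H(M)$ acts trivially on ${\cal M}(G)$. I would first note that $H\subseteq G^{00}$ in $T'$: since $H$ is torsion-free definably connected we have $H=H^{00}$ by Fact 1.13(iii,iv), and $H\cap G^{00}$ is a type-definable subgroup of $H$ of bounded index, hence contains $H^{00}=H$. Given $p\in {\cal M}(G)$, $p$ is almost periodic so weakly generic (Lemma 1.7(i)); by Remark 1.9 and Fact 1.14(i), every formula in $p$ is $f$-generic, and by a standard compactness/Zorn argument of the Chernikov--Simon type, $p$ extends to a global $f$-generic $p^*\in S_G(M')$. By Fact 1.14(ii), $Stab(p^*)=G^{00}\supseteq H\supseteq H(M)$, so $h\cdot p^*=p^*$ for every $h\in H(M)$; restricting to $M_0$ yields $h\cdot p=p$, and the $G(M)$-action on ${\cal M}(G)$ therefore factors through $C(M)$.

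For (ii), by (i) the map $\pi\colon{\cal M}(G)\to S_C(M_0)$ is continuous and $C(M)$-equivariant. Its image is a minimal $C(M)$-subflow of $S_C(M_0)$, which by uniqueness for the $fsg$ group $C$ (Remark 1.17) must equal ${\cal M}(C)$. The delicate point is injectivity: if $\pi(p)=\pi(p')$ with $p,p'\in{\cal M}(G)$, I would combine (i)---which trivializes the $H(M)$-action on the fiber $\pi^{-1}(\pi(p))\cap{\cal M}(G)$---with semigroup considerations in $S_C(M_0)$ and the rigidity of the $fsg$ minimal flow ${\cal M}(C)$ to force the fiber to be a single point. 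Once (ii) is in hand, (iii) is a formality: set $q*p=\pi^{-1}(q*\pi(p))$ for $q\in S_C(M_0)$, $p\in{\cal M}(G)$, which is well-defined because ${\cal M}(C)$ is a left ideal of $S_C(M_0)$ and $\pi|{\cal M}(G)$ is a homeomorphism; uniqueness of the extension follows from injectivity of $\pi$. For (iv), fix $p\in{\cal M}(G)$ and put $q=\pi(p)$; the set $\{r\in{\cal M}(C):r*q=q\}$ is non-empty (since ${\cal M}(C)*q={\cal M}(C)$ by minimality of the left ideal ${\cal M}(C)$ combined with continuity of $r\mapsto r*q$) and is a closed sub-semigroup of ${\cal M}(C)$, so by Ellis--Numakura it contains an idempotent $v$; then $\pi(v*p)=v*q=q=\pi(p)$, and injectivity from (ii) forces $v*p=p$. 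For (v), $\{q*p:q\in{\cal M}(C)\}=\pi^{-1}({\cal M}(C)*\pi(p))$ is a closed, $C(M)$-invariant subset of ${\cal M}(G)$ containing $p=v*p$, and so by $C(M)$-minimality of ${\cal M}(G)$ it equals ${\cal M}(G)$.

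The main obstacle I anticipate is the injectivity of $\pi$ in (ii): continuous equivariant surjections between minimal flows are not injective in general, so the argument must genuinely combine (i) with the structural rigidity of the $fsg$ minimal flow ${\cal M}(C)$, rather than relying on formal flow-theoretic manipulations alone.
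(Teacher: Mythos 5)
Your argument for (i) is sound and essentially matches the structure one would want: $H = H^{00}$ (Remark 1.20) is contained in $G^{00}$ (via the bounded-index observation), $p$ almost periodic $\Rightarrow$ weakly generic $\Rightarrow$ extends to a global $f$-generic $p^*$ with $\mathrm{Stab}(p^*) = G^{00} \supseteq H(M)$, and restricting back to $M_0$ gives $H(M)$-invariance of $p$. That part is fine.

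The problem is that you have not actually proved (ii), and you say so yourself: the injectivity of $\pi|{\cal M}(G)$ is left as ``combine (i) with semigroup considerations and the rigidity of the $fsg$ minimal flow to force the fiber to be a single point.'' This is not an argument; it is a statement of the obstacle. Since your constructions in (iii), (iv) and (v) all go through $\pi^{-1}$ applied to ${\cal M}(C)$, every one of them depends on that unproved injectivity. In particular your proof of (iv) -- Ellis--Numakura in ${\cal M}(C)$ followed by pulling back through $\pi^{-1}$ -- has no content until (ii) is established, and likewise for (v). So the proposal, as written, has a genuine gap at its centre, and you have correctly located it.

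It is also worth noting that the paper takes a different route for (iii): rather than defining $q*p := \pi^{-1}\bigl(q*\pi(p)\bigr)$, it uses the fact that $p$ is \emph{definable} over $M_0$ (not just $H(M)$-invariant), takes the unique global heir $p'$ of $p$ over a saturated $M' \succ M_0$, notes that $p'$ is $H(M')$-invariant, and sets $q*p := \bigl((g/H)\cdot p'\bigr)|M_0$ for $q = tp(g/H \, / \, M_0)$. This heir-based construction does not presuppose injectivity of $\pi$, and is what the paper actually manipulates in Section 5 (e.g.\ in the claim inside the proof of Proposition 5.4, which computes $q*p$ formula by formula via $d_{p,M}$). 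Your $\pi^{-1}$ construction, even if (ii) were supplied, would leave you with an object that is harder to compute with in those later arguments. You should also be aware that the definability of types in ${\cal M}(G)$ over $M_0$ -- which both the heir construction and, I suspect, any honest proof of injectivity in (ii) will need -- is itself a point requiring justification in the $o$-minimal/$NIP$ setting, and is not something that drops out of almost-periodicity alone.
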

\begin{proof}  Everything is contained in the proof of Lemma 5.11 in \cite{CPS} except possibly (iv).  But we first recall the definition of $q*p$ for $q\in S_{C}(M_{0})$ and $p\in {\cal M}(G)$ as it appears later.  Now as $p$ is definable and $H(M)$-invariant, the unique global heir $p'\in S_{G}(M')$ of $p$ is $H(M')$-invariant, so for $g\in G(M')$, $gp'$ depends only on the coset $g/H$, so we write it as $(g/H)\cdot p'$. For $q = tp((g/H)/M_{0})\in S_{C}(M_{0})$, define $q*p$ to be  $((g/H)\cdot p')|M_{0}$. 
\newline
Proof of (iv):  Let $p\in {\cal M}(G)$. Then $\pi(p)\in {\cal M}(C)$, so by Chapter I, Proposition 2.3 of \cite{Glasner} for example, there is an idempotent $v\in S_{C}(M_{0})$ such that $v*\pi(p) = \pi(p)$. By (iii), $v*p = p$.

\end{proof}

\section{Global case}
Here we prove Theorem 1.1. 
We assume from the beginning that $G$ is a definable, definably connected, definably amenable group, which is defined in the saturated $o$-mininmal structure $M$. 
We have the canonical exact sequence $H \to G \overset{\pi}{\rightarrow} C$ as described earlier.   As $T$ has Skolem functions, let $f:C\to G$ be a definable section of $\pi$ (but of course $f$ need not be a homomorphism, as $G$ is not necessarily split). 
We aim to show that if $dim(H) = 1$  then any global weakly generic type of $G$ is almost periodic. 

\begin{Lemma} If $dim(H)$ is $1$ then $H$ is central in $G$.
\end{Lemma}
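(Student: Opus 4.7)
The key input will be that a $1$-dimensional definably connected torsion-free group in our $o$-minimal context is definably isomorphic to $(R,+)$, where $R$ is the underlying real closed field. In particular $H$ is abelian, so the conjugation action of $G$ on $H$ factors through $C = G/H$, yielding a definable group homomorphism $\alpha\colon C \to \mathrm{Aut}_{\mathrm{def}}(H)$; the task is to show that $\alpha$ is trivial.

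Using $H \cong (R,+)$, any definable group automorphism of $H$ is definable and additive, and hence continuous by $o$-minimality (a definable additive map with a jump would violate additivity on an interval around the jump); being continuous and $\Q$-linear on $R$, it is $R$-linear, i.e.\ multiplication by some scalar in $R^{\times}$. Thus $\mathrm{Aut}_{\mathrm{def}}(H)$ is naturally isomorphic to $R^{\times}$ as a definable group, with identity component $R^{>0}$. Since $C$ is definably connected (because $G$ and $H$ are), $\alpha(C)$ is a definably connected subgroup of $R^{\times}$ and hence contained in $R^{>0}$; since $C$ is definably compact (being the quotient appearing in Fact~1.18), $\alpha(C)$ is also definably compact.

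Now $R^{>0}$ admits no nontrivial definably compact subgroup: by $o$-minimality any proper definable subgroup of the $1$-dimensional definably connected group $R^{>0}$ is $0$-dimensional and hence finite, but $R^{>0}$ is torsion-free (if $x>0$ and $x^{n}=1$ then $x=1$), so the only finite subgroup is $\{1\}$; and $R^{>0}$ itself is plainly not definably compact. Therefore $\alpha(C) = \{1\}$, conjugation by $G$ acts trivially on $H$, and $H \subseteq Z(G)$.

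The step requiring the most external input is the classification of $1$-dimensional torsion-free definably connected groups as additive groups of real closed fields (together with the resulting identification of their definable automorphism groups); once these are in hand, the rest is a routine combination of definable connectedness, definable compactness, and torsion-freeness.
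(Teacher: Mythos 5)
Your argument has a gap at its very first step. The claim that every $1$-dimensional definably connected torsion-free definable group in an $o$-minimal expansion of $RCF$ is definably isomorphic to $(R,+)$ is false. Already in pure $RCF$, the group $(R^{>0},\times)$ is $1$-dimensional, definably connected and torsion-free, but it is \emph{not} definably isomorphic to $(R,+)$: a definable isomorphism would be a definable exponential, and $RCF$ is polynomially bounded so no such function exists. For this $H$ the ring of definable endomorphisms is $\Q$ rather than $R$, so $\mathrm{Aut}_{\mathrm{def}}(H)$ is (isomorphic to) $\Q^{\times}$, not $R^{\times}$, and your discussion of definably compact subgroups of $R^{>0}$ simply does not apply. (There is also a smaller issue: the step ``continuous and $\Q$-linear on $R$, hence $R$-linear'' uses density of $\Q$ in $R$, which fails in a nonstandard real closed field; one should instead argue via $o$-minimal differentiability that a definable additive map has constant derivative. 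But this is repairable.) The overall strategy --- a definably compact definably connected group cannot act nontrivially on a $1$-dimensional torsion-free $H$ because $\mathrm{Aut}_{\mathrm{def}}(H)$ has no room for it --- is sound in spirit, but making it uniform over all possible $H$ requires more care (e.g.\ showing the kernel of the action is a definable subgroup of finite index, then invoking definable connectedness of $C$).

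The paper's proof sidesteps classification entirely and argues directly with orbits. Since $C$ is definably connected, any point of $H$ with finite $C$-orbit has a finite-index hence full stabilizer and so is a fixed point; since $H$ is torsion-free and $1$-dimensional, the fixed-point subgroup is either trivial or all of $H$. If the action were nontrivial the fixed points would be trivial, so every non-identity orbit is infinite, hence $1$-dimensional, hence there are only finitely many of them; each is a continuous image of the definably compact $C$, hence closed; together with $\{e\}$ one gets a finite partition of $H$ into at least two nonempty closed (hence clopen) sets, contradicting definable connectedness of $H$. You should either adopt this direct orbit-counting argument, or carefully rework your approach to avoid the unwarranted identification of $H$ with $(R,+)$.
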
 
\begin{proof}  This should be considered folklore, and there are several ways of seeing it. Here is one. We consider the action of $C$ on $H$ (induced by conjugation in $G$). If by way of contradiction this action is not trivial, then the definable subgroup of $H$ consisting of the fixed points of $C$ has to be trivial as $dim(H) = 1$ and $H$ is definably connected. Every other $C$-orbit has to be infinite  so again as $dim(H) = 1$ there are only finitely many infinite orbits. As $C$ is definably compact, each of these orbits is closed, but then we contradict definable connectedness of $H$.

\end{proof}

\begin{Lemma} Suppose $H$ is central Let $p = tp(a/M)$ be a weak generic type of $G$.  Write $a$ (uniquely) as $f(\pi(a))\cdot h$ with $h\in H$. Then $\pi(p) = tp(\pi(a))/M)$ is a generic type of $C$ and $tp(h/M)$ is a left   $H(M)$-invariant type of $H$.
\end{Lemma}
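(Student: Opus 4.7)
The plan is to use the fact that $p$ being weakly generic forces its stabilizer to contain $G^{00}$, and to show that $H \subseteq G^{00}$; then both conclusions fall out of the decomposition $a = f(\pi(a)) \cdot h$. By Fact~1.14(i), $p$ is $f$-generic, and Fact~1.14(ii) gives $Stab(p) = G^{00}$. To see $H \subseteq G^{00}$, note that $H \cap G^{00}$ is type-definable in $H$ of bounded index (since $G^{00}$ has bounded index in $G$), hence contains $H^{00}$; since $H$ is torsion-free and definably connected, the remarks after Fact~1.18 give $H = H^{00}$, so $H \subseteq G^{00} = Stab(p)$. In particular, $p$ is $H$-invariant.

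For the first assertion, I would show that $\pi(p)$ is $f$-generic in $C$; since $C$ is definably compact and hence $fsg$, Remark~1.17 then forces $\pi(p)$ to be generic. Descent of $f$-genericity goes as follows: any formula $\psi(y) \in \pi(p)$ pulls back to $\psi(\pi(x)) \in p$, which is $f$-generic in $G$. If some translate $c \cdot \psi$ were to divide over $M$, witnessed by an $M$-indiscernible sequence $(c_i)$ with $c_0 = c$ and $\{\psi(c_i^{-1} y)\}$ inconsistent, then $(f(c_i))$ would also be $M$-indiscernible (as $f$ is $M$-definable), and $\{\psi(c_i^{-1}\pi(x))\}$ would be inconsistent in $x$, exhibiting $f(c) \cdot (\psi \circ \pi)$ as dividing over $M$ in $G$ and contradicting the $f$-genericity of $\psi \circ \pi$.

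For the second assertion, I would use the $H$-invariance of $p$ established above. For any $h' \in H(M)$, $h' \cdot a$ realizes $p$. Centrality of $H$ yields
\[
h' \cdot a \;=\; h' \cdot f(\pi(a)) \cdot h \;=\; f(\pi(a)) \cdot (h' h) \;=\; f(\pi(h' \cdot a)) \cdot (h' h),
\]
so the $H$-component of $h' \cdot a$ under our decomposition is exactly $h' h$. Since the map $g \mapsto f(\pi(g))^{-1} g$ is $M$-definable, the equality $tp(h' \cdot a / M) = tp(a/M)$ descends to $tp(h' h / M) = tp(h/M)$, establishing left $H(M)$-invariance of $tp(h/M)$.

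The most delicate point is the descent of $f$-genericity along $\pi$; this is where the existence of an $M$-definable Skolem section $f \colon C \to G$ is used essentially, to transport an $M$-indiscernible sequence from $C$ back into $G$ without loss of indiscernibility. Once this is granted, the rest of the argument is a routine stabilizer-plus-decomposition computation.
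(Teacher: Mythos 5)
Your proposal is correct and, for the second assertion, matches the paper's argument essentially line for line: you establish $H = H^{00} \subseteq G^{00} = Stab(p)$ (via Fact~1.14 and Remark~1.20), then use centrality to compute that the $H$-component of $h'\cdot a$ is $h'h$, and push forward $tp(h'a/M)=tp(a/M)$ through the $M$-definable map $g \mapsto f(\pi(g))^{-1}g$. For the first assertion your route differs from the paper's. The paper argues at the level of stabilizers: from $Stab(p)=G^{00}$ it deduces $Stab(\pi(p)) \supseteq \pi(G^{00}) = C^{00}$, and since $C$ is $fsg$ this already forces $\pi(p)$ to be generic. You instead verify $f$-genericity of $\pi(p)$ directly by transporting $M$-indiscernible sequences from $C$ back to $G$ through the $M$-definable section $f$, and then invoke $fsg$ to upgrade $f$-generic to generic. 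Both routes rest on the same two facts (1.14 and $fsg$ of $C$), so this is a presentational rather than a structural difference; your version has the minor advantage of not needing the (true but unproved-in-the-paper) identity $\pi(G^{00})=C^{00}$, at the cost of a longer dividing computation. One tiny point worth flagging in your write-up: when you claim $\{\psi(c_i^{-1}\pi(x))\}$ is inconsistent, you should note that this follows because any $g$ in the intersection would give $\pi(g)$ witnessing consistency of $\{\psi(c_i^{-1}y)\}$ in $C$ — you assert the inconsistency without this one-line justification, though it is easy.
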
 
\begin{proof}  By 1.14,  $Stab(p) = G^{00}$.   So 
clearly $Stab(\pi(p)) = C^{00} = \pi(G^{00})$. and so is generic in $C$.  Note that $h = f(\pi(a))^{-1}\cdot a$.

Also by Remark 1.20, $H = H^{00}$ so is contained in  $G^{00}$ whereby $p$ is $H(M)$invariant. Let $h_{1}\in H(M)$.   Then $h_{1}\cdot h = h_{1}\cdot(f(\pi(a))^{-1}\cdot a) = (f(\pi(a))^{-1}\cdot h_{1}a$ as 
$H$ is central in $G$. But as $H(M)\subseteq Stab(p)$, $tp(h_{1}a/M) = tp(a/M)$.   Moreover as $\pi(h_{1}a) = \pi(a)$, $f(\pi(h_{1}a))^{-1} = (f(\pi(a))^{-1}$. It follows that
$tp(((f(\pi(a))^{-1}\cdot a/M) = 
tp((f(\pi(h_{1}a))^{-1})\cdot h_{1}a/M) = tp((f(\pi(a))^{-1}h_{1}a/M)  = tp(h_{1}h/M)$..
Hence $tp(h/M) = tp(h_{1}h/M)$ as required. 
\end{proof}

Let us now assume $dim(H) = 1$. Then there are two $H(M)$-invariant types in $S_{H}(M)$, $p_{0}^{+}$ and $p_{0}^{-}$, at $+$-infinity and $-$-infinity respectively, and they are both definable. Let $p = tp(a/M)$ be our global weakly generic type of $G$, and write  as above $a = (f(\pi(a))\cdot h$.  So $tp(h/M)$ is global $H(M)$-invariant type of $H$ and we know it is $p_{0}^{+}$ or $p_{0}^{-}$ Without loss it is $p_{0}^{+}$. 
\begin{Remark} $p_{0}^{+}$ is weakly orthogonal to $q$ (as well as $f(q)$) for any global generic type $q$ of $C$.

\end{Remark}
\begin{proof} Remember that this weak orthogonality means that $p_{0}^{+}(x_{0})\cup q(x_{1})$ extends to a unique complete global type in $(x_{0},x_{1})$.

Now $p_{0}^{+}$ can be assumed to be a complete $1$-type over $M$.  If $p_{0}^{+}$ is not weakly orthogonal to $q$ then by $o$-minimality, some realization $h$ of $p_{0}^{+}$ is in the definable closure, over $M$ of $b$ for some realization $b$ of $q$.
But then there is a small elementary substructure $N$ of $M$ such that $tp(h/M)$ is both definable over and finitely satiasfiable in $N$. This is easily seen to be impossible, by inspection.

\end{proof} 

Hence for $q$ a global generic type of $C$, we can speak about the type $f(q)\cdot p_{0}^{+}$, which is defined to be  $tp(f(b)\cdot h/M)$ where $b$ realizes $q$ and $h$ realizes $p_{0}^{+}$.  By weak orthogonality it is unique, and in particular $tp(h/M,b)$ is the unique heir of $p_{0}^{+}$ over $M,b$.  Again we fix realization $a$ of the global weak generic type $p$, and write $a = (f(\pi(a))\cdot h$. 

\begin{Lemma} $cl(G(M)\cdot p)$ is precisely the set of types $f(q)\cdot p_{0}^{+}$ as $q$ ranges over global generic types of $C$. 

\end{Lemma}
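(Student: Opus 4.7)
My plan is to identify $cl(G(M)\cdot p)$ with the image of the space $\mathrm{Gen}(C)$ of global generic types of $C$ under the continuous map $\Phi\colon q\mapsto f(q)\cdot p_{0}^{+}$, which is well-defined on $\mathrm{Gen}(C)$ by Remark 2.3.

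The main step is to verify that for each $g\in G(M)$, $g\cdot p = \Phi(\pi(g)\cdot \pi(p))$. Using centrality of $H$ (Lemma 2.1), I would write
\[
g\cdot a \;=\; f(\pi(g)\pi(a))\cdot h_{g}\cdot h,\qquad h_{g}:=f(\pi(g)\pi(a))^{-1}\cdot g\cdot f(\pi(a))\in H,
\]
and observe that $h_{g}\in\mathrm{dcl}(M,\pi(a))$. By Remark 2.3, $\mathrm{tp}(h/M,\pi(a))$ is the unique heir of $p_{0}^{+}$; since $H$ is $1$-dimensional, torsion-free, and definably connected (hence a definably ordered abelian group) and $p_{0}^{+}$ is the type at $+\infty$, adding any element of $\mathrm{dcl}(M,\pi(a))$ preserves the property of being larger than every element of $\mathrm{dcl}(M,\pi(a))$. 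Therefore $h_{g}\cdot h$ also realises the heir of $p_{0}^{+}$ over $M,f(\pi(g)\pi(a))$, so $g\cdot p = f(\pi(g)\cdot \pi(p))\cdot p_{0}^{+} = \Phi(\pi(g)\cdot \pi(p))$.

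Next I would verify that $\Phi$ is continuous. Since $p_{0}^{+}$ is definable over $M$, for every formula $\psi(z)$ over $M$ in the sort of $G$, setting $\chi(y,x):=\psi(f(y)\cdot x)$ and letting $d\chi(y)$ denote the $M$-definition of $\chi$ given by $p_{0}^{+}$, one has $\psi\in\Phi(q)$ iff $d\chi\in q$; so $\Phi^{-1}([\psi])$ is clopen in $\mathrm{Gen}(C)$.

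Finally, since $C$ is definably compact and hence $fsg$, the set $\mathrm{Gen}(C)$ is the unique minimal $C(M)$-subflow of $S_{C}(M)$, so it is closed (hence compact), and the $C(M)$-orbit of $\pi(p)$ is dense in it. The first step gives $G(M)\cdot p = \Phi(C(M)\cdot\pi(p))$, and $\Phi(\mathrm{Gen}(C))$ is closed by compactness and continuity; density then forces $cl(G(M)\cdot p) = \Phi(\mathrm{Gen}(C))$, which is the stated conclusion. The main obstacle will be the heir verification in the first step: it relies crucially on centrality of $H$ (so that the product $h_{g}\cdot h$ is governed by the abelian arithmetic of $H$) together with the ``type at infinity'' characterisation of $p_{0}^{+}$ in a $1$-dimensional $o$-minimal group.
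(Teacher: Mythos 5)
Your proof is correct and takes a genuinely different route from the paper's.  The paper starts from the identity $cl(G(M)\cdot p)=\{tp(ca/M): c\in G \mbox{ and } tp(c/M,a) \mbox{ finitely satisfiable in } M\}$ and, for each such $c$, computes directly (writing $c\cdot f(\pi(a))=f(d)\cdot h_{1}$ with $d=\pi(c)\pi(a)$, noting that $tp(h_{1}/M,h)$ is finitely satisfiable in $M$, and deducing $h_{1}h\models p_{0}^{+}$) that $tp(ca/M)$ has the form $f(q)\cdot p_{0}^{+}$ with $q$ generic; as written this explicitly gives only the inclusion $cl(G(M)\cdot p)\subseteq\{f(q)\cdot p_{0}^{+}\}$.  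You instead establish the \emph{orbit-level} identity $g\cdot p=\Phi(\pi(g)\cdot\pi(p))$ for $g\in G(M)$, using centrality of $H$ and the explicit ``type at $+\infty$'' description of $p_{0}^{+}$ (translating $h$ by $h_{g}\in\mathrm{dcl}(M,\pi(a))$ preserves the relevant cut), and then transfer to closures via continuity of $\Phi$ on the compact space $\mathrm{Gen}(C)$.  This buys you both inclusions cleanly: $\Phi(\mathrm{Gen}(C))$ is compact hence closed and contains $G(M)\cdot p$, while $\Phi(\mathrm{Gen}(C))=\Phi\bigl(cl(C(M)\cdot\pi(p))\bigr)\subseteq cl\bigl(\Phi(C(M)\cdot\pi(p))\bigr)=cl(G(M)\cdot p)$, whereas the paper leaves the reverse inclusion implicit.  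The heir verification is parallel in spirit (both hinge on weak orthogonality and the unique heir of $p_{0}^{+}$), but you lean on the ordered one-dimensional structure of $H$ where the paper uses a finite-satisfiability observation; either works here.  Your continuity check via the $p_{0}^{+}$-definition of $\chi(y,x):=\psi(f(y)\cdot x)$ is a clean way to see $\Phi$ is a continuous map of profinite spaces, and it packages the argument so that it visibly yields a homeomorphism onto the orbit closure, which is exactly what Corollary 2.5 then exploits.
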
 
\begin{proof} We know that $cl(G(M)\cdot p) = \{tp(c\cdot a/M): c\in G$ and $tp(c/M,a)$ is finitely satisfiable in $M$\}.  Let $c$ be such that  $tp(c/M,a)$ is finitely satisfiable in $M$. Then $tp(c/M,a,h)$ is finitely satisfiable in $M$. Now $\pi(c\cdot f(\pi(a)) = \pi(c)\pi(a)$. Also $\pi(c)\in C$ and $tp(\pi(c)/M,\pi(a))$ is finitely satisfiable in $M$. Hence as the space of generic types of $C$ is closed and $C(M)$-invariant, it follows that  $\pi(c)\pi(a) = d$ realizes a generic type of $C$.
So the conclusion is that $\pi(c\cdot f(\pi(a))) = d$  realizes a generic type $q$ say of $C$. 
\newline
It follows that $c\cdot f(\pi(a)) = f(d)\cdot h_{1}$ for some $h_{1}\in H$. 

Now as $tp(a/M,h)$ is finitely satisfiable in $M$, and $h_{1}\in dcl(M,c,a)$ it follows that $tp(h_{1}/M,h)$ is finitely satisfiable in $M$, whereby $h$ realizes $p_{0}^{+}|(M,h_{1})$  (the unique heir of $p_{0}^{+}$) and so $h_{1}h$ realizes $p_{0}^{+}$ too.
So the end result is that $ca = cf(\pi(a))h = f(d)h_{1}$ realizes $f(q)\cdot p_{0}^{+}$ as required.

\end{proof}

\begin{Corollary} $cl(G(M)\cdot p)$ is a minimal $G(M)$-flow, in particular $p$ is almost periodic. 
\end{Corollary}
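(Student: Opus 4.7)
The plan is to show that $cl(G(M)\cdot p)$ equals the orbit closure of any of its points; minimality then follows automatically.

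First I would observe that every $p' \in cl(G(M)\cdot p)$ is itself weakly generic. This is because the set of weakly generic points is closed (Lemma 1.7(i)) and is manifestly $G(M)$-invariant (left translation by an element of $G(M)$ preserves weak genericity). So one may legitimately apply Lemmas 2.3 and 2.4 to any such $p'$.

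Next, fix $p' \in cl(G(M)\cdot p)$. By Lemma 2.4 we can write $p' = f(q)\cdot p_0^+$ for some global generic type $q$ of $C$, and $p'$ is realized by $a' = f(b)\cdot h$ where $b$ realizes $q$ and $h$ realizes the heir of $p_0^+$ over $(M,b)$. The key point I would verify is that when $a'$ is decomposed canonically as $a' = f(\pi(a'))\cdot h'$ in the sense of Lemma 2.3, we recover $\pi(a') = b$ (since $\pi\circ f = \mathrm{id}_C$ and $h \in H = \ker \pi$), and hence $h' = f(b)^{-1}a' = h$ realizes $p_0^+$. Thus the $H$-component of $p'$ is $p_0^+$, not $p_0^-$.

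Finally, I apply Lemma 2.4 to $p'$ in place of $p$, to conclude
\[
cl(G(M)\cdot p') \;=\; \{\, f(q')\cdot p_0^+ : q' \text{ a global generic type of } C\,\} \;=\; cl(G(M)\cdot p),
\]
where the second equality is Lemma 2.4 for $p$ itself. Since this holds for every $p' \in cl(G(M)\cdot p)$, the closed $G(M)$-invariant set $cl(G(M)\cdot p)$ contains no proper closed $G(M)$-invariant subset, i.e. it is a minimal subflow. In particular $p$ is almost periodic. The only substantive verification is the second step — checking that the $H$-component is preserved under orbit closure so that we may rerun Lemma 2.4 at $p'$ — which is just the observation that $\pi(f(b)h) = b$ and $f(\pi(f(b)h))^{-1}\cdot f(b)h = h$.
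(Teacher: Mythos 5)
Your proof is correct, but it takes a genuinely different route from the paper's. The paper argues via the map $\pi$: it observes that $\pi$ restricted to $X = cl(G(M)\cdot p)$ is a continuous surjection onto $Y$, the space of global generic types of $C$ (the unique minimal $C(M)$-subflow of $S_C(M)$), that $\pi|_X$ is injective by Lemma 2.4 (the parametrization $q \mapsto f(q)\cdot p_0^+$ is a bijection onto $X$), and that $\pi$ intertwines the $G(M)$- and $C(M)$-actions; minimality of $X$ then transfers from minimality of $Y$ through this equivariant homeomorphism. You instead verify minimality directly from the definition, showing that every $p' \in X$ has dense orbit. The load-bearing step you flag --- that the canonical decomposition of $p'$ again has $H$-component $p_0^+$ rather than $p_0^-$ --- is indeed essential: the paper remarks after the corollary that there are exactly two minimal subflows, one for each sign, so a sign flip at $p'$ would send you into the disjoint flow built from $p_0^-$. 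Your preliminary observation that $X$ consists of weakly generic types (closedness by Lemma 1.7(i) plus $G(M)$-invariance) is also needed and correctly supplied, since Lemmas 2.3 and 2.4 are stated for weakly generic types. Both arguments bottleneck on Lemma 2.4; the paper's version is a touch more economical once one trusts the homeomorphism-transfers-minimality principle, while yours is more elementary and self-contained in that it invokes nothing beyond Lemma 2.4 and the definition of a minimal flow.
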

\begin{proof} Let $X = cl(G(M)\cdot p)$ and let $Y$ be the space of global generic types of $C$ (which we know to be the unique minimal $C(M)$- subflow of $S_{C}(M)$).  So $\pi$ induces a surjective continuous map $\pi|X: X \to Y$. By Lemma 2.4,  $\pi$ is one-to-one, hence is a homeomorphism.  Now for $g\in G(M)$, and $r\in X$, $\pi(gr) = \pi(g)\pi(r)$, so as $Y$ is a minimal $C(M)$-flow and $\pi|X$ is $1-1$ it follows that $X$ is a minimal $G(M)$-flow.

\end{proof}

So we have shown that if $dim(H) = 1$ then any global weakly generic type of $G$ is almost periodic.  In fact we have shown that there are precisely two minimal subflows of $S_{G}(M)$ corresponding to $p_{0}^{+}$ and $p_{0}^{-}$. 

In any case, using also 1.17 and 1.20 we have proved Theorem 1.1.

\section{Local case}
We aim here towards the proof of Theorem 1.2, and again it is enough to prove the $dim(H) = 1$ case by 1.17 and 1.20.   
 We can no longer use the weak orthogonality arguments, but on the other hand we have definability of types, as well as the semigroup structure $*$ on the type space $S_{G}(M_{0})$. The additional assumption that $G$ is a product is needed for technical reasons, and we leave it as a problem to get around it. 

So we have $M$ an arbitrary model of $T$ over which $G$ is defined, and $M_{0} = M^{ext}$.  $\bar{M_{0}}$ denotes a saturated elementary extension of $M_{0}$, where we can realize types over $M_{0}$. 

We will for now take $G$ to be an arbitrary definably connected definably amenable group, definable over $M$, and we again have

$$ 1\to H \to G \overset{\pi}{\rightarrow} C \to 1$$

As in the beginning of the proof of Lemma 1.21 we have:

\begin{Remark} Let $p\in S_{G}(M_{0})$ be $H(M)$-invariant. Then for $r= tp(g/M_{0})$, $r*p$ depends only on the coset $gH$ of $g$, so we can write $r*p$ as $\pi(r)*p$.
\end{Remark}

The key result is the following: 
\begin{Lemma} Let $p\in S_{G}(M_{0}$. Then $p$ is almost  periodic iff $p$ is $H(M)$-invariant, and $v*p = p$ for some generic type $v\in S_{C}(M_{0})$ of $C$ (where in fact $v$ can be chosen to be an idempotent). 
\end{Lemma}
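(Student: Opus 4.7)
The plan is to handle both directions by exploiting the semigroup structure on $S_G(M_0)$ together with the bridge to $S_C(M_0)$ coming from Lemma 1.21, the $o$-minimal structure theorem (Fact 1.18), and Remark 1.17 for the $fsg$ group $C$.

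For the forward direction, suppose $p$ is almost periodic, so $p$ belongs to some minimal subflow $\mathcal{M}$ of $S_G(M_0)$. Lemma 1.21(i) says that on $\mathcal{M}$ the $G(M)$-action factors through $C(M)$, hence $H(M)$ fixes $p$, giving the $H(M)$-invariance. Lemma 1.21(iv) applied to $\mathcal{M}$ then produces an idempotent $v \in \mathcal{M}(C)$ with $v * p = p$; since $C$ is $fsg$ in $Th(M_0)$, $\mathcal{M}(C)$ is the space of generic types of $C$, so $v$ is a generic idempotent.

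For the reverse direction, assume $p$ is $H(M)$-invariant and $v * p = p$ for some generic $v \in S_C(M_0)$. The goal is to realize $p$ in a minimal subflow. First I would upgrade $v$ to an idempotent: the set $E = \{u \in \mathcal{M}(C) : u * p = p\}$ is nonempty, closed (right multiplication $u \mapsto u * p$ is continuous by Lemma 1.13) and a subsemigroup (by associativity), so Ellis--Namakura supplies an idempotent in $E$. Next I must make the mixed product $q * p$ with $q \in S_C(M_0)$ precise: pick any lift $\tilde q \in S_G(M_0)$ with $\pi(\tilde q) = q$, and set $q*p := \tilde q * p$. This is independent of the lift because, by Lemma 1.13, $X \in \tilde q * p$ iff $d_{p,M}(X) \in \tilde q$, and the $H(M)$-invariance of $p$ forces $d_{p,M}(X)$ to be a union of left $H(M)$-cosets, i.e., the pullback under $\pi$ of an externally definable subset of $C(M)$. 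The same formula shows $q \mapsto q * p$ is continuous, and a routine unwinding of realizations gives associativity together with the equivariance $g \cdot (q * p) = (\pi(g) \cdot q) * p$ for $g \in G(M)$.

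Now define $\mathcal{N} := \mathcal{M}(C) * p \subseteq S_G(M_0)$. It is closed (continuous image of the compact $\mathcal{M}(C)$), contains $p = v*p$, and is $G(M)$-invariant by the equivariance formula, so $\mathcal{N}$ is a subflow containing $p$. To see $\mathcal{N}$ is minimal, take any minimal subflow $\mathcal{M}^* \subseteq \mathcal{N}$ and write $p_0 = q_0 * p \in \mathcal{M}^*$ with $q_0 \in \mathcal{M}(C)$. By Lemma 1.21(v), $\mathcal{M}^* = \mathcal{M}(C) * p_0 = (\mathcal{M}(C) * q_0) * p$. Since $\mathcal{M}(C)$ is a minimal left ideal in the compact right-topological semigroup $S_C(M_0)$ and $q_0 \in \mathcal{M}(C)$, the general fact that $I * x = I$ for $x \in I$ (the set $I * x$ is a closed left ideal contained in $I$) gives $\mathcal{M}(C) * q_0 = \mathcal{M}(C)$, hence $\mathcal{M}^* = \mathcal{N}$. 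Therefore $\mathcal{N}$ is minimal and $p$ is almost periodic.

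The main obstacle is the bookkeeping in the middle step: making the mixed product $q * p$ rigorous, checking independence of the lift via the $H(M)$-invariance of $d_{p,M}(X)$, and verifying continuity, associativity, and $G(M)$-equivariance. Once this framework is in place, the conclusion follows cleanly from the minimal left ideal structure of $S_C(M_0)$ and Lemma 1.21(v).
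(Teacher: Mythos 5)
Your proof is correct, but it takes a genuinely different route from the paper's. The paper runs everything through the algebraic characterization of almost periodicity: using $\mathrm{cl}(G(M_0)\cdot p) = S_G(M_0)*p$, it observes that $p$ is almost periodic iff $S_G(M_0)*q*p = S_G(M_0)*p$ for every $q$, then pushes the whole equation down to $C$ via Remark~3.1 (the $H(M)$-invariance of $p$), and finally discharges the resulting identity $(S_C(M_0)*\pi(q)*v)*p = (S_C(M_0)*v)*p$ by invoking the known almost periodicity of $v$ in $S_C(M_0)$. This is entirely internal to the semigroup algebra and never names the minimal subflow. You instead build the candidate minimal subflow $\mathcal{N}=\mathcal{M}(C)*p$ explicitly, show it is a closed $G(M)$-invariant set containing $p$, and establish minimality by taking an arbitrary $p_0 = q_0*p$ in it and using $\mathcal{M}(C)*q_0 = \mathcal{M}(C)$ (the minimal-left-ideal fact) plus associativity and Lemma~1.21(v). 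Your approach is more constructive and actually exhibits the minimal flow, which is useful information; the paper's is terser and avoids the bookkeeping you flagged (well-definedness, continuity, equivariance of the mixed $*$). One small point worth making explicit in your write-up: your lift-defined $*$ and the heir-defined $*$ of Lemma~1.21(iii) both continuously extend the $C(M)$-action on a type in a minimal subflow, hence agree on the dense set $C(M) \subseteq S_C(M_0)$, so they coincide — that's what licenses your appeal to 1.21(v) after you switch to the lift definition. Also note that once you have $\mathcal{N}$ and $p \in \mathcal{N}$, you could skip the detour through a minimal $\mathcal{M}^*\subseteq\mathcal{N}$: for any $p_0=q_0*p\in\mathcal{N}$ one has $\mathrm{cl}(G(M)\cdot p_0) = (S_C(M_0)*q_0)*p = \mathcal{M}(C)*p = \mathcal{N}$ directly, giving minimality without invoking 1.21(v) at all.
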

\begin{proof}  Left to right is contained in Lemma 1.21.  The ``new" observation here is the converse.

Note that $cl(G(M_{0})\cdot p) = S_{G}(M_{0})*p$. So in fact $p$ is almost periodic iff 
\newline
(**)  for each $q\in S_{G}(M_{0})$, $S_{G}(M_{0})*q*p = S_{G}(M_{0})*p$.
\newline
Let us assume the right hand side in the Lemma.  We have to prove (**).  Fix $q\in S_{G}(M_{0})$, so we have to prove that
\newline
$(S_{G}(M_{0})*q)*p = S_{G}(M_{0})*p$.
\newline
As $p$ is $H(M)$-invariant, by Remark 3,1 we have to prove:
\newline
(***)  $(S_{C}(M_{0})*\pi(q))*p = S_{C}(M_{0})*p$. 
\newline
Now we are supposing that $v*p = p$ for some generic (so almost periodic) type $v\in S_{C}(M_{0})$.
So the right hand side in (***) coincides with $(S_{C}(M_{0})*v)*p$.   Similarly the left hand side of (***) coincides with $(S_{C}(M_{0})*\pi(q)*v)*p$.
But applying (**) to $C$ and using the almost periodicity of $v$ we see that $(S_{C}(M_{0})*\pi(q)*v) = S_{C}(M_{0})*v$.  This proves (***).

\end{proof}

We now apply this lemma to the case where $G$ is a product.

\begin{Lemma}  Suppose in addition that $G = C\times H$. Let $p = tp(a/M) \in S_{G}(M_{0})$. Suppose that $a = (c,h)$ for $c\in C$ and $h\in H$. 
Then $p$ is almost periodic if and only if $tp(c/M_{0})$ is generic for $C$ (equivalently almost periodic for $C$), $tp(h/M_{0})$ is $H(M_{0})$-invariant 
(equuivalently almost periodic for $H$) and $h$ realizes the unique heir of $tp(h/M_{0})$ over $M_{0},c$. 

\end{Lemma}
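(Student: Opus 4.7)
The plan is to establish $(\Leftarrow)$ first and then derive $(\Rightarrow)$ from it using Lemma 1.21(ii). Write $p_{H} := tp(h/M_{0})$, and when $p_{H}$ is $H(M_{0})$-invariant let $p_{H}'$ denote its unique global $M_{0}$-invariant extension (which exists in NIP and, in the $o$-minimal $\dim(H)=1$ setting of interest, is definable, being $p_{0}^{+}$ or $p_{0}^{-}$ of Section 2).

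For $(\Leftarrow)$, I apply the criterion of Lemma 3.2: I need $p$ to be $H(M)$-invariant and $v \ast p = p$ for some generic $v \in S_{C}(M_{0})$. The invariance reduces, via $(1, h') \cdot (c, h) = (c, h'h)$, to showing $tp(h'h/M_{0}c) = tp(h/M_{0}c)$ for each $h' \in H(M_{0})$; by (b) both sides extend $p_{H}$, and a short check using the $H(M_{0})$-invariance of $p_{H}$ shows $tp(h'h/M_{0}c)$ is again an heir of $p_{H}$, so the uniqueness in (c) forces equality. For the semigroup condition, select an idempotent generic $v \in \mathcal{M}(C)$ with $v \ast tp(c/M_{0}) = tp(c/M_{0})$ (furnished by Ellis--Numakura applied to the stabilizer of $tp(c/M_{0})$ inside $\mathcal{M}(C)$). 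Compute $v \ast p$ via Lemma 1.12 as $tp((gc, h)/M_{0})$, where $g \in C$ realizes the coheir of $v$ over $\bar{M_{0}}(c, h)$. By choice of $v$, $tp(gc/M_{0}) = tp(c/M_{0})$. The decisive step is the standard NIP commutativity of global coheirs with global invariant types: since $tp(g/\bar{M_{0}}ch)$ is finitely satisfiable in $M_{0}$ and $h \models p_{H}'|M_{0}c$ by (c), switching the order of realization gives $tp(h/M_{0}cg) = p_{H}'|M_{0}cg$, so $h$ also realizes the heir over $M_{0}gc$. Since the 2-type $tp((c', h')/M_{0})$ of any pair with $h' \models p_{H}'|M_{0}c'$ is determined by $tp(c'/M_{0})$ (an immediate consequence of $M_{0}$-invariance of $p_{H}'$), we conclude $(gc, h) \equiv_{M_{0}} (c, h)$, i.e.\ $v \ast p = p$. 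Lemma 3.2 then yields almost periodicity.

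For $(\Rightarrow)$, assume $p \in \mathcal{M}(G)$ for some minimal subflow $\mathcal{M}(G)$. Lemma 3.2 gives $H(M_{0})$-invariance of $p$, whose projection to the second coordinate yields (b), and $v \ast p = p$ for some idempotent generic $v \in \mathcal{M}(C)$. Applying $\pi$ via Lemma 1.21(iii), $v \ast \pi(p) = \pi(p)$; since $C$ is $fsg$, $\mathcal{M}(C)$ is the unique minimal two-sided ideal of $S_{C}(M_{0})$, so $\pi(p) \in \mathcal{M}(C)$, giving (a). For (c), construct $\tilde{p} := tp((c, \tilde{h})/M_{0})$ with $\tilde{h} \models p_{H}'|M_{0}c$. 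Then $\tilde{p}$ satisfies (a), (b), (c), so by $(\Leftarrow)$ it is almost periodic. Both $p$ and $\tilde{p}$ project to $tp(c/M_{0})$ and share the common $H$-coordinate type $p_{H}$; using the orbit description of Lemma 1.21(v), each minimal subflow in the product setting has all its elements sharing a single $H$-coordinate type, and a counting/structure argument (comparing the count of almost periodic types produced by $(\Leftarrow)$ against $\sum_{\mathcal{M}(G)} |\mathcal{M}(C)|$) forces $\tilde{p}$ and $p$ into the same minimal subflow. The bijection $\pi|\mathcal{M}(G)$ from Lemma 1.21(ii) then gives $\tilde{p} = p$, hence $(c, \tilde{h}) \equiv_{M_{0}} (c, h)$. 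An $M_{0}c$-automorphism sending $\tilde{h}$ to $h$ yields $tp(h/M_{0}c) = tp(\tilde{h}/M_{0}c) = p_{H}'|M_{0}c$, establishing (c).

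The main obstacle is the coheir--invariance commutativity step in $(\Leftarrow)$, which invokes the standard but non-trivial NIP fact that a global coheir and a global $M_{0}$-invariant type commute in their joint realization. A secondary subtlety, in $(\Rightarrow)$, is the verification that $\tilde{p}$ lies in the same minimal subflow as $p$: this rests on identifying the minimal subflows of $S_{G}(M_{0})$ in the product setting with the invariant types of $H$, a structural fact combining the orbit description of Lemma 1.21(v) with the count of almost periodic types supplied by $(\Leftarrow)$.
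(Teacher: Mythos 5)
Your $(\Leftarrow)$ direction is essentially sound and tracks the paper's argument, up to a change of language. Where the paper takes $d$ realizing $v$ with $tp(d/M_0,c,h)$ finitely satisfiable in $M_0$, deduces that $tp(dc/M_0,h)$ is finitely satisfiable in $M_0$, and then reads off $tp(dc,h/M_0)=tp(c,h/M_0)$ via heir--coheir duality plus definability of $tp(h/M_0)$, you invoke commutativity of a coheir and an $M_0$-invariant type; these are two ways of saying the same thing, and your version is fine. Your reduction of $H(M_0)$-invariance of $p$ to the heir condition in (c) is also correct.

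The $(\Rightarrow)$ direction is where there is a real gap. The paper's proof is short and direct: from $v*p=p$ one lets $d$ realize the global coheir $v'$ of $v$, obtains $tp(dc,h/M_0)=tp(c,h/M_0)$, observes that $dc$ realizes a global generic of $C$ finitely satisfiable in $M_0$ so that $tp(dc/M_0)$ is generic and $tp(dc/M_0,h)$ is finitely satisfiable in $M_0$, and then transports these facts across the equality of types to conclude that $tp(c/M_0)$ is generic and $tp(c/M_0,h)$ is a coheir of $tp(c/M_0)$, which by duality is precisely the heir condition (c). Your route instead builds an auxiliary type $\tilde p$ satisfying (a)--(c), notes by $(\Leftarrow)$ that it is almost periodic, and then tries to force $\tilde p=p$ by showing $p$ and $\tilde p$ lie in the \emph{same} minimal subflow so that Lemma 1.21(ii) applies. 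But the step that puts them in a common minimal subflow --- ``each minimal subflow in the product setting has all its elements sharing a single $H$-coordinate type, and a counting/structure argument ... forces $\tilde p$ and $p$ into the same minimal subflow'' --- is not justified and is essentially what needs proving. Knowing that $p$ and $\tilde p$ share the same $H$-coordinate type over $M_0$ and project to the same element of ${\cal M}(C)$ does not, without further work, place them in the same minimal subflow; that is exactly the content of the heir condition you are trying to establish, so the reasoning risks circularity. You should instead extract condition (c) directly from $v*p=p$ as in the paper: take $d\models v'$, use $v*p=p$ to get $tp(dc,h/M_0)=p$, note $tp(dc/M_0,h)$ is finitely satisfiable in $M_0$, and transfer via an automorphism over $M_0$ to get that $tp(c/M_0,h)$ is finitely satisfiable in $M_0$, i.e.\ $tp(h/M_0,c)$ is the heir of $tp(h/M_0)$.
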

\begin{proof}  If $p$ is as in the right hand side then it is clearly $H(M)$-invariant. Let $v$ be an almost periodic type of $C$ such that 
$v*tp(c/M_{0}) = tp(c/M_{0})$  (as referenced in the proof of 1.21 such $v$ exists). Then  for $d$ realizing $v$ such that $tp(d/M_{0},c,h)$ is finitely 
satisfiable in $M_{0}$, $tp(dc/M,h)$ is finitely satisfiable in $M_{0}$ so $tp(dc,h/M_{0}) = tp(c,h/M_{0}) = p$, so  $v*p = p$. So by Lemma 3.2, $p$ is almost periodic.

Conversely, if $p = tp(c,h/M_{0})$ is almost periodic, then it satisfies the conditions of Lemma 3.2. So  clearly $tp(h/M_{0})$ is $H(M)$-invariant. Now suppose $v*p = p$ for some generic $v\in S_{C}(M_{0})$. Let $v'$ be the unique coheir of $v$ over $\bar{M_0}$. So $v'$ is a global (for $Th(M_{0})$) (left and right) generic type of the definably compact group $C$, finitely satisfiable in $M_{0}$. Let $d$ realize $v'$. So $dc$ is also a global generic type of $C$, so finitely satisfiable also in $M_{0}$.  But as $v*p = p$, $tp(dc,h/M_{0}) = tp(c,h/M_{0})$ which means that $c$ realizes a generic type of $C$ and its type over $M_{0},h$ is finitely satisfiable in $M_{0}$. This proves that $p$ satisfies the right hand side conditions of the lemma.
\end{proof}

\begin{Lemma} Suppose $dim(H) = 1$ (as a definable group in the original $o$-minimal theory). Then  there are only two $H(M_{0})$ invariant types in $S_{H}(M_{0})$ (equivalently only two almost periodic types). 
\end{Lemma}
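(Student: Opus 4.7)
The plan is to identify $S_H(M_0)$ with the space of global types $p^*$ on $H$ over $\bar M$ (a saturated model of $T$) that are finitely satisfiable in $M$, and to run a direct cut analysis on such $p^*$. Since $H$ is $1$-dimensional, definably connected, and torsion-free in an $o$-minimal expansion of $RCF$, $H$ carries a definable linear order compatible with translation, and $H(M_0)=H(M)$ is unbounded above and below in itself. So a type on $H$ over $\bar M$ is determined by a cut in $H(\bar M)$, and $H(M)$-invariance passes unchanged between $p$ and $p^*$.

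The first step is to show that any realization $a$ of an $H(M)$-invariant $p^*$ satisfies $a>H(M)$ or $a<H(M)$, i.e.\ the cut cannot lie strictly inside the convex hull of $H(M)$ in $H(\bar M)$. Otherwise there would exist $m_1,m_2\in H(M)$ with $m_1<a<m_2$; by invariance $a+g$ also realizes $p^*$ for every $g\in H(M)$, so $m_1<a+g<m_2$, equivalently $a<m_2-g$, for every $g\in H(M)$. Letting $g$ range through arbitrarily large elements of $H(M)$ drives $m_2-g$ arbitrarily small, contradicting $m_1<a$.

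The second step is to pin $p^*$ down in each of the two remaining cases using finite satisfiability in $M$. Assuming $a>H(M)$: for $c\in\bar M$ lying at or below the convex hull of $H(M)$, the formula $x>c$ is automatically in $p^*$; for $c$ strictly above the convex hull of $H(M)$, $x>c$ cannot lie in $p^*$, since finite satisfiability would then produce $m\in H(M)$ with $m>c$, contradicting the location of $c$, so $x<c$ must lie in $p^*$. This uniquely determines $p^*$ as the ``just above $H(M)$'' coheir, call it $p_0^+$; the symmetric analysis produces a unique $p_0^-$, yielding exactly two $H(M)$-invariant types in $S_H(M_0)$.

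The only non-routine point is the invariance-plus-translation argument of the first step, and it is immediate from the cofinality of $H(M)$ in itself; everything else is bookkeeping. The one background fact to flag explicitly is that a $1$-dimensional definably connected torsion-free $o$-minimal group admits a definable linear order compatible with translation, which underwrites the entire cut picture and the use of $\pm\infty$-over-$H(M)$ terminology.
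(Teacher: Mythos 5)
Your argument is correct, and it takes a genuinely different route from the paper. The paper's proof is short and modular: if $p\in S_H(M_0)$ is $H(M)$-invariant, so is $p|M\in S_H(M)$, and $S_H(M)$ is already known to have exactly two $H(M)$-invariant types, namely the definable types $p_0^{\pm}$ at $\pm\infty$; then Lemma 3.18 of \cite{CPS} (a definable type over $M$ has a \emph{unique} completion over $M^{ext}$) is invoked to lift uniqueness to $S_H(M_0)$. You instead work directly in the coheir picture, identifying $S_H(M_0)$ with global types over $\bar M$ finitely satisfiable in $M$, and run a cut analysis: $H(M)$-invariance forces the realization outside the convex hull of $H(M)$, and finite satisfiability in $M$ then fixes the cut exactly at the boundary of that convex hull, giving the two coheirs. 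Your approach avoids citing the CPS lemma and is more self-contained in the $1$-dimensional setting; the cost is that it is hands-on and doesn't extend to higher-dimensional $H$, whereas the paper's ``restrict, classify, uniquely lift'' pattern is the kind of argument that recurs throughout the paper.

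Two small points worth tightening. First, you assert but do not justify that a $1$-dimensional torsion-free definably connected group in an $o$-minimal structure carries a definable linear order compatible with (left) translation; this is standard (Peterzil--Steinhorn) but since the entire cut picture rests on it, you should cite it rather than merely flag it. Second, your argument as written establishes that there are \emph{at most} two $H(M)$-invariant types; for ``exactly two'' you should also observe that the boundary cuts of the convex hull of $H(M)$ do define consistent complete global types, that these are finitely satisfiable in $M$, and that they are in fact $H(M)$-invariant (translation by $g\in H(M)$ permutes $H(M)$ and hence fixes its convex hull). This last verification is easy but not literally ``bookkeeping'' and deserves a sentence.
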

\begin{proof}  Suppose $p\in S_{H}(M_{0})$ is $H(M)$-invariant. Then so is the restriction $p|M$ to $S_{H}(M)$. But we know that $S_{H}(M)$ has only two $H(M)$-invariant types $p_{0}^{+}$ and $p_{0}^{-}$, the types at plus and minus infinity. Each of these types is definable, so by Lemma 3.18 of \cite{CPS},  have unique completions in $S_{H}(M_{0})$, each of which of course remains $H(M)$-invariant.  
\end{proof}

\vspace{2mm}
\noindent
{\em Completion of proof of Theorem 1.2  in the case where $dim(H) = 1$. }
\newline
 Let $p_{0}^{+}$, $p_{0}^{-}$ denote the two $H(M)$-invariant types in $S_{H}(M_{0})$ (given by the previous lemma).  Let $Y$ be the closed set of generic types of $C$ over $M_{0}$. Then by Lemma 3.3, the set of almost periodic types of $G$ over $M_{0}$ is the set of $tp(c,h/M_{0})$ where $c$ realizes a type in $Y$ and $h$ realizes the unique heir of either $p_{0}^{+}$ or $p_{0}^{-}$ over $M_{0},c$. This is easily seen to be a closed set of types (using for example definability of $p_{0}^{+}$ and $p_{0}^{-}$). 

We have proved here that working in $S_{G}(M_{0})$, the set of almost periodic types is closed.  This proves Theorem 1.2, except for the  final sentence which will be deduced in section 5.

\section{The example} 
Here we prove Theorem 1.3.  We will again deal with the space $S_{G}(\R)$, and deduce the global case in Section 5. 

$M$ will be the field $\R$ of real numbers, and $G$ the product of the circle group $C$ by $H = (\R,+)^{2}$. We also use $G, C, H$ to refer to their  interpretations in a saturated elementary extension $\bar M$ of $M$. 
$C =\{(x,y): x^{2} + y^{2} = 1\}$ and we tend to identify a point of $C$ with its $x$-coordinate, working above the $x$-axis.

We aim to describe a type in $S_{G}(M)$ which is weakly generic but not almost periodic. 

So here the ambient complete theory is $RCF$. We will be using repeatedly the fact that
\newline
(*)  if $K$ is a model and $a>K$ then the set of positive powers $\{a^{n}:n>0\}$ is cofinal in $dcl(K,a)$. 

\vspace{2mm}
\noindent
As all types over $M$ are definable, we are actually already  in the local situation of the last section.

Let $a>\R$, $b_{1}> dcl(\R,a)$ and $b_{2} =  b_{1}a$.    Consider the cut in  $dcl(\R(a,b_{1}))$ whose left hand part is $\{e\in dcl(\R(a,b_{1})): e \leq$ some element of $dcl(\R(a))\}$. 
Let $q$ be the corresponding complete $1$-type over $dcl(\R(a,b_{1}))$, and let $c$ realize $q$. Finally let $d= c^{-1}$.  Noting that $0< d< a^{-1}$, and that $a^{-1}$ is infinitesimal, we see that:
\newline
(I) $tp(d/M, b_{1}, b_{2})$ is not finitely satisfiable in $M$. 

\vspace{5mm}
\noindent
As mentioned above, we view $d$ as an element of $C$ by identifying it with $(d, +\sqrt{1-d^{2}})$.  Hence by Lemma 3.3  we have:
\begin{Lemma}  $p = tp((d,b_{1},b_{2})/M)$ is not an almost periodic type of $G$.
\end{Lemma}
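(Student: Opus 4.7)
The plan is to apply Lemma~3.3 directly to $G = C \times H$, with the decomposition $c = d \in C$ and $h = (b_1, b_2) \in H$. Since the ambient model is $M = \R$, Marker--Steinhorn gives $M^{ext} = M$, so $p = tp((d, b_1, b_2)/M)$ lies in $S_G(M_0)$ with $M_0 = M$, placing us squarely in the hypothesis of Lemma~3.3.

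I would then argue by contrapositive, extracting the relevant consequence from the converse direction of Lemma~3.3. Unwinding that proof, if $p$ were almost periodic then in particular $tp(d/M, b_1, b_2)$ would have to be finitely satisfiable in $M$. Observation (I) states precisely that this fails. To see (I) explicitly, consider the formula $\phi(x; b_1, b_2)$ given by $0 < x \wedge x \cdot b_2 < b_1$. Then $d$ realizes $\phi$, since $0 < d < a^{-1} = b_1/b_2$. But no element of $\R$ can realize $\phi$: for any positive $m \in \R$, the product $m \cdot a$ is positive infinite (as $a > \R$), so $m \cdot b_2 = m \cdot b_1 \cdot a \gg b_1$, contradicting $x \cdot b_2 < b_1$. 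Hence $p$ is not almost periodic.

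The only conceptual point worth spelling out is the equivalence between the ``heir'' clause appearing in the statement of Lemma~3.3 and the finite satisfiability of $tp(c/M_0, h)$ in $M_0$; but this equivalence is already made explicit inside the proof of Lemma~3.3 (using that $C$ is $fsg$ and working with the coheir of a generic type of $C$), so no new machinery is needed here. Everything nontrivial has already been done in the construction of $a, b_1, b_2$, and $d$; the lemma follows immediately from Lemma~3.3 and (I).
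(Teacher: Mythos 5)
Your proof is correct and follows the same route as the paper, which simply cites Lemma 3.3 together with observation (I). You fill in the two details the paper leaves implicit — the heir/coheir duality translating the heir clause of Lemma 3.3 into finite satisfiability of $tp(d/M,b_1,b_2)$ in $M$, and an explicit witnessing formula for (I) — and both are handled correctly.
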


On the other hand we will show:
\begin{Lemma}  $p = tp((d,b_{1},b_{2})/M)$ is a weakly generic type of $G$.
\end{Lemma}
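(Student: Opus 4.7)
The plan is to show $p$ lies in the closure of the almost periodic (AP) types in $S_G(\R)$; by Lemma 1.7(i) this is equivalent to $p$ being weakly generic. So it suffices, for every formula $\phi(u,v_1,v_2)\in p$ over $\R$, to exhibit some AP type $q\in S_G(\R)$ containing $\phi$. Since $M^{ext}=M=\R$, Lemma 3.3 describes AP types of $G=C\times H$ as $q = tp(c^*,h_1^*,h_2^*/\R)$ where $c^*$ is a $C$-generic, $tp(h_1^*,h_2^*/\R)$ is $H(\R)$-invariant, and $(h_1^*,h_2^*)$ realizes the unique heir of $tp(h_1^*,h_2^*/\R)$ over $\R(c^*)$.

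Given $\phi$, I would work in a saturated $\bar M\supset\R$; pick a real $r>0$ (to be specified), $c^*\in\bar M$ a positive infinitesimal, $\epsilon\in\bar M$ with $0<\epsilon<c^*$, and $h_1^*\in\bar M$ with $h_1^*>\R(c^*,\epsilon)$; then set $h_2^* := (r+\epsilon)h_1^*$ and $q := tp(c^*,h_1^*,h_2^*/\R)$. Then $q$ is AP: $c^*$ is a perturbation of $(0,1)\in C$, so its type is a $C$-generic; $tp(h_1^*,h_2^*/\R)$ is $H(\R)$-invariant because both coordinates are positive infinite and the ratio $r+\epsilon$ (with $r\in\R$ and $\epsilon$ a positive infinitesimal) has $\R$-translation-invariant type; and the heir conditions over $\R(c^*)$ amount to $h_1^*>\R(c^*)$ and $\epsilon<c^*$, both of which we have arranged.

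To arrange $\phi\in q$, match the signs of each polynomial $P\in\R[u,v_1,v_2]$ appearing in $\phi$ at the two tuples $(d,b_1,b_2)$ and $(c^*,h_1^*,h_2^*)$. Substituting $v_2=yv_1$ gives $P(u,v_1,yv_1) = \sum_N Q_N(u,y)\,v_1^N$; let $N^*$ be the largest $N$ with $Q_N\not\equiv 0$, and factor $Q_{N^*}(u,y) = u^{i^*}R(u,y)$ with $R(0,y)\in\R[y]$ nonzero of leading coefficient $r_L$. The cut condition $c=1/d>a^n$ for all $n$ forces $d^i a^j$ to be infinitesimal whenever $i\geq 1$, so $R(d,a) = R(0,a) + (\text{infinitesimal}) \sim r_L a^L$, and since $b_1>\R(a)$ makes $b_1^{N^*}$ the dominant $v_1$-power, one gets $\operatorname{sign}(P(d,b_1,b_2)) = \operatorname{sign}(r_L)$. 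Analogously, because $c^*$ and $\epsilon$ are infinitesimal and $h_1^*$ dominates $\R(c^*,\epsilon)$, $\operatorname{sign}(P(c^*,h_1^*,h_2^*)) = \operatorname{sign}(R(0,r))$, which equals $\operatorname{sign}(r_L)$ whenever $r$ exceeds every real root of $R(0,y)$. Since $\phi$ mentions only finitely many polynomials, a single real $r$ larger than all the finitely many bounds so produced makes all signs agree, so $\phi\in q$.

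The main obstacle is this sign-matching computation: the very cut conditions on $(d,a)$ that obstruct $p$ from being AP nonetheless collapse the sign of $P(d,b_1,b_2)$ to the leading-coefficient sign of $R(0,y)$ at $y=a$, and this sign is faithfully recovered at the AP witness by $R(0,y)$ at $y=r$ for $r$ sufficiently large. With this reduction in hand, an AP witness $q$ is built uniformly from $\phi$, so every formula in $p$ lies in some AP type, finishing the proof.
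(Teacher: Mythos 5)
Your overall strategy is genuinely different from the paper's. Both begin with the same reduction: since the weak generics form the closure of the almost periodic types, it suffices to show each formula in $p$ lies in some AP type, where the AP types of $G=C\times H$ over $\R$ are classified by Lemma 3.3 (applicable because $\R^{ext}=\R$). The paper then explicitly axiomatizes $p$ (Claims I, II), reduces an arbitrary formula of $p$ to one of a specific shape, and weakens that shape to a formula visibly sitting inside a product of a $C$-generic with an $H(\R)$-invariant type. You instead go directly through quantifier elimination: for each polynomial $P$ in the formula you extract its leading behaviour at $(d,b_1,b_2)$ and match its sign against a family of AP witnesses $q_r$ parametrized by a large real $r$. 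The sign computation — $\operatorname{sign}P(d,b_1,b_2)=\operatorname{sign}r_L$ on one side, and the same sign recovered at the AP witness once $r$ passes the real roots of $R(0,y)$ — is a correct and more systematic alternative to the paper's explicit inequality manipulation, and it buys you uniformity over all formulas rather than working from a concrete axiomatization of $p$.

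Two steps are not quite right as written, though both are fixable. (1) The heir condition of Lemma 3.3 does not reduce to ``$h_1^*>\R(c^*)$ and $\epsilon<c^*$.'' Applying the $\phi$-definition of the formula $x_2-rx_1<y^2x_1$, the heir of $tp(h_1^*,h_2^*/\R)$ over $\R(c^*)$ forces $\epsilon<c^{*2}$ as well, and more generally $\epsilon$ must lie below every positive element of $dcl(\R,c^*)$. With only $\epsilon<c^*$ arranged it may happen that $\epsilon>c^{*2}$, and then $q$ is not almost periodic and the argument fails. The fix is to take $\epsilon$ realizing the positive-infinitesimal cut over $dcl(\R,c^*)$. (2) The assertion that ``$b_1>\R(a)$ makes $b_1^{N^*}$ the dominant $v_1$-power'' glosses over the fact that the coefficients $Q_N(d,a)$ involve $d$. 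Dominance requires $b_1>dcl(\R,a,d)=dcl(\R,a,c)$: the ratio $|Q_N(d,a)/Q_{N^*}(d,a)|$ carries a factor $c^{i^*}$. That $b_1>dcl(\R,a,c)$ holds is true, but it is a consequence of the cut of $c$ (namely $c<b_1^{1/n}$ for all $n$, so anything definable from $a,c$ that exceeds $dcl(\R,a)$ still lies below $b_1$), not of $b_1>dcl(\R,a)$ alone; this needs to be stated and justified.
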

\begin{proof}
We first want to give an  axiomatization of $p$. First $tp(b_{1}/M)$ is axiomatized by $\{y_{1}> r: r\in M\}$, i.e. $\{y_{1}>n: n\in \N\}$.  
\newline
{\em Claim I.}  $tp(b_{2}/M,b_{1})$ is axiomatized by $\{y_{2} > nb_{1}: n\in \N\} \cup \{y_{2} < b_{1}^{1 + 1/n}:n\in \N\}$. 
\newline
{\em Proof.}  Note that using (*),  $tp(a/M,b_{1})$ is axiomatized by $\{x> n:n\in \N\} \cup \{x<b_{1}^{1/n}: n\in \N\}$. As $b_{2} = ab_{1}$, we obtain the claim. 

\vspace{2mm}
\noindent
{\em Claim II.}  $tp(d/M,b_{1},b_{2})$ is axiomatized by   $\{x^{-1}> (b_{2}/b_{1})^{n}: n\in \N\}\cup \{x^{-1} < b_{1}^{1/n}: n\in \N\}$. 
\newline
{\em Proof.}  This is also straightforward but we give some details. The statement is equivalent to saying that $tp(c/M,a,b_{1})$ is axiomatized by $\{x> a^{n}:n\in \N\} \cup \{x<b_{1}^{1/n}: n\in \N\}$.  Clearly $c$ satisfies this set of formulas. If the set of formulas does not determine a complete type over $M,a,b_{1}$ there is some $M$-definable function $g$   such that $a^{n} < g(a,b_{1}) < b_{1}^{1/n}$ for all $n\in \N$. 
Now as $b_{1}>dcl(M,a)$ and $g(a,b_{1})> dcl(M,a)$, $g(a,y)$ has to be an order-preserving isomorphism between $(h_{1}(a),+\infty)$ and $(h_{2}(a), +\infty)$ for some $h_{1}, h_{2}$ over $M$.  So the compositional inverse of $g(a,-)$ is an order preserving isomorphism between $(h_{2}(a),+\infty)$ and $(h_{1}(a),+\infty)$. By (*) above, $b_{1} < g(a,b_{1})^{n}$ for some $n$, whereby $b_{1}^{1/n} < g(a,b_{1})$, contradiction.  End of proof of Claim II. 

\vspace{3mm}
\noindent
We continue with the proof of the lemma. 
We want to show that any formula $\phi(x,y_{1},y_{2})$  in $p$ is weakly generic, equivalently is in some almost periodic type over $M$. By the Claims  above we may assume that $\phi(x,y_{1},y_{2})$ is
$$(y_{1} > n_{0}) \wedge (ny_{1} < y_{2} < y_{1}^{1+1/n}) \wedge (   y_{1}^{-1/n}  <   x< (y_{2}/y_{1})^{-n}) $$

\noindent
where $n_{0}, n$ are positive natural numbers. Note that increasing $n_{0}$ and/or $n$ gives us another,  stronger,  formula which is also in $p$. 
So (increasing $n_{0}$) we may assume that if $y_{1} > n_{0}$, then $(n+1)y_{1} \leq y_{1}^{1+1/n}$. 

\noindent
Note that for  $ny_{1} < y_{2} < (n+1)y_{1}$,   $y_{2}/y_{1} < n+1$, hence
\newline
(i)  if $y_{1}>n_{0}$ and $ny_{1} < y_{2} < (n+1)y_{1}$, then $(ny_{1} < y_{2} < y_{1}^{1+1/n})$ and $(y_{2}/y_{1})^{-n} > (n+1)^{-n}$.

Also
\newline
(ii) $y_{1} > n_{0}$ implies $y_{1}^{1/n} > n_{0}^{1/n}$ implies $y_{1}^{-1/n} < n_{0}^{-1/n}$. 

\noindent
Enlarge $n_{0}$ if necessary so that 
\newline
(iii) $n_{0}^{-1/n} < (n+1)^{-n}$.

Hence by (i), (ii), and (iii),  if $y_{1} > n_{0}$, and $ny_{1} < y_{2} < (n+1)y_{1}$ and $n_{0}^{-1/n} < x < (n+1)^{-n}$, then 
$\phi(x,y_{1},y_{2})$ holds. 

But  the formula  $(y_{1} > n_{0})\wedge (ny_{1} < y_{2} < (n+1)y_{1})$ is weakly generic in $H$, because it is in some $H(M)$- invariant type, such as $tp(c,d/M)$ where $c> M$ and $d$ realizes the  type corresponding to the cut whose left hand side is  $\{c+r:r\in\R\}$. 

And also the formula  $n_{0}^{-1/n} < x < (n+1)^{-n}$ is generic in $C$, because in fact any infinite definable over $M$ subset of $C$ is generic. 

So by Lemma 3.3 we find an almost periodic type of $G$ containing the formula $y_{1} > n_{0}$ and $ny_{1} < y_{2} < (n+1)y_{1}$ and $n_{0}^{-1/n} < x < (n+1)^{-n}$, so it also contains $\phi(x,y_{1},y_{2})$ which is then weakly generic.  This concudes the proof of the lemms and also of Theorem 1.3,  at least insofar as $S_{G}(\R)$ is concerned. 

\end{proof}

\section{Restrictions and extensions of almost periodic types}

We prove here Theorem 1.4, and use some of the additional observations to complete proofs of Theorems 1.2 and 1.3. 

We begin in the general context, namely $T$ an arbitrary theory, $G$ a group definable over $M$. We will also assume $T$ is $NIP$ so that  $Th(M^{ext})$ has quantifier eliimination and $S_{G,ext}(M) = S_{G}(M^{ext})$, but it is not really necessary. 

\begin{Lemma}  Suppose $p\in S_{G}(M)$. Then $p$ is almost periodic if and only if  $p$ has an almost periodic extension $p'$ in $S_{G}(M^{ext})$. 
\end{Lemma}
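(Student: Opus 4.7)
The plan is to exploit the restriction map $\pi \colon S_{G}(M^{ext}) \to S_{G}(M)$, which is continuous, surjective, and $G(M)$-equivariant, together with the standard topological-dynamics fact that a continuous equivariant surjection between $G$-flows sends minimal subflows onto minimal subflows. Spelled out: if $\mathcal{N}$ is a minimal $G(M)$-subflow of $S_{G}(M^{ext})$ and $\mathcal{L}$ is any minimal $G(M)$-subflow of $\pi(\mathcal{N})$ (which exists by Zorn), then $\pi^{-1}(\mathcal{L}) \cap \mathcal{N}$ is a non-empty closed $G(M)$-invariant subset of $\mathcal{N}$, hence equals $\mathcal{N}$ by minimality, forcing $\pi(\mathcal{N}) = \mathcal{L}$. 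So $\pi(\mathcal{N})$ is itself minimal.

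For the ``if'' direction, suppose $p' \in S_{G}(M^{ext})$ is almost periodic with $\pi(p') = p$. Then $\mathcal{N} := \mathrm{cl}(G(M) \cdot p')$ is a minimal subflow of $S_{G}(M^{ext})$, so by the observation above $\pi(\mathcal{N})$ is a minimal subflow of $S_{G}(M)$ containing $p$; hence $p$ is almost periodic.

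For the ``only if'' direction, suppose $p$ is almost periodic, so $\mathcal{M} := \mathrm{cl}(G(M) \cdot p)$ is a minimal $G(M)$-subflow of $S_{G}(M)$. The preimage $\pi^{-1}(\mathcal{M})$ is a non-empty closed $G(M)$-invariant subset of $S_{G}(M^{ext})$ and by Zorn's lemma contains some minimal $G(M)$-subflow $\mathcal{N}$; note that a minimal subflow of a closed invariant subset is also a minimal subflow of the whole space. By the observation above, $\pi(\mathcal{N})$ is a minimal subflow of $S_{G}(M)$ contained in $\mathcal{M}$, hence equals $\mathcal{M}$ by minimality. Therefore some $p' \in \mathcal{N}$ satisfies $\pi(p') = p$, and this $p'$ is the desired almost periodic extension.

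There is no serious obstacle here: the content is simply that passage from $S_{G}(M^{ext})$ to $S_{G}(M)$ is a morphism of compact $G(M)$-flows, combined with the elementary pullback/pushforward behaviour of minimal subflows. The only point meriting a sentence of care is the remark that a minimal subflow of a closed invariant subset is automatically a minimal subflow of the ambient space, which justifies both directions uniformly.
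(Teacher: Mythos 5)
Your proof is correct and takes essentially the same approach as the paper: both directions hinge on the fact that the restriction map $\pi\colon S_G(M^{ext})\to S_G(M)$ is a continuous $G(M)$-equivariant surjection of compact flows, so images of minimal subflows are minimal, and one picks a minimal subflow inside a suitable preimage. The paper's version is terser (it starts from $\mathrm{cl}(G(M)\cdot p')$ for an arbitrary extension $p'$ rather than from $\pi^{-1}(\mathcal{M})$, and leaves the pushforward-of-minimal-flows fact implicit), but the underlying argument is the same.
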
 
\begin{proof} Left to right: Let $p'\in S_{G}(M^{ext})$ be an arbitrary extension of $p$. Let ${\cal M}\subseteq S_{G}(M^{ext})$ be a minimal subflow of $cl(G(M)\cdot p')$. Then the projection of ${\cal M}$ to $S_{G}(M)$ is a subflow of $S_{G}(M)$ contained in $cl(G(M)\cdot p)$, so has to equal $cl(G(M)\cdot p)$ as the latter is minimal. So any preimage of $p$ in ${\cal M}$ is an almost periodic point of $S_{G}(M^{ext})$. 
\newline
Right to left: Suppose $p'\in S_{G}(M^{ext})$ is almost periodic and let ${\cal M} = cl(G(M)\cdot p')\subseteq S_{G}(M^{ext})$ be the minimal subflow it generates. Then easily the projection of ${\cal M}$ on $S_{G}(M)$ is also a minimal subflow, so $p$ is almost periodic.
\end{proof} 

\begin{Lemma} Suppose $p\in S_{G}(M)$ is weak generic. Then $p$ has an extension $p'\in S_{G}(M^{ext})$ which is weak generic.
\end{Lemma}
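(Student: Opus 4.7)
The plan is to bootstrap from Lemma 5.1 and the characterization of weak generic points as closures of almost periodic points (Lemma 1.7(i)). Concretely, I would argue that $p$ lies in the closure of the almost periodic types of $S_{G}(M)$, lift each of these to an almost periodic type in $S_{G}(M^{ext})$ via Lemma 5.1, and then extract a limit point in $S_{G}(M^{ext})$ that projects to $p$.

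First, let $\pi:S_{G}(M^{ext})\to S_{G}(M)$ denote the canonical restriction map, which is continuous, surjective, and $G(M)$-equivariant. Since $p$ is weak generic, by Lemma 1.7(i) applied in $S_{G}(M)$ there is a net $(q_{i})_{i\in I}$ of almost periodic types in $S_{G}(M)$ with $q_{i}\to p$. Second, for each $i$ use Lemma 5.1 to pick an almost periodic extension $q_{i}'\in S_{G}(M^{ext})$ with $\pi(q_{i}')=q_{i}$.

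Third, by compactness of $S_{G}(M^{ext})$ the net $(q_{i}')$ admits a convergent subnet $(q_{i_{j}}')$ with some limit $p'\in S_{G}(M^{ext})$. By continuity of $\pi$, $\pi(p')=\lim_{j}\pi(q_{i_{j}}')=\lim_{j}q_{i_{j}}=p$, so $p'$ is the required extension. Finally, $p'$ sits in the closure of the almost periodic points of $S_{G}(M^{ext})$, so Lemma 1.7(i) applied in $S_{G}(M^{ext})$ yields that $p'$ is weak generic, completing the proof.

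There is no real obstacle here; the only mild subtlety is making sure the lifting and the limiting are done in the right order, which is why one first fixes the almost periodic approximations $q_{i}$ in $S_{G}(M)$ and then lifts via Lemma 5.1 rather than attempting to extend $p$ directly and push weak genericity across.
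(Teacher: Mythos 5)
Your proof is correct, but it takes a genuinely different route from the paper. The paper's argument is direct and combinatorial: by Remark 1.8(ii), a definable subset $Y$ of $G(M)$ is weakly generic in $Th(M)$ iff it is weakly generic in $Th(M^{ext})$ (since the genericity of $G(M)\setminus Z$ for $Z$ a finite union of $G(M)$-translates is an intrinsic property of the set). Hence every formula in $p$ remains weakly generic in $Th(M^{ext})$, and since the non-weak-generic externally definable subsets of $G(M)$ form a proper ideal, $p$ can be extended to a type over $M^{ext}$ disjoint from that ideal, which is then weakly generic. Your argument instead reinterprets weak genericity via Lemma 1.7(i) as membership in the closure of the almost periodic points, lifts a net of almost periodic approximants using Lemma 5.1, and extracts a convergent subnet; continuity of the restriction map then forces the limit to lie over $p$, and Lemma 1.7(i) again gives weak genericity of the limit. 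Both arguments are valid. The trade-off is that the paper's proof is more elementary and logically independent of Lemma 5.1, whereas your proof leans on the (nontrivial) lifting result Lemma 5.1 but makes the dynamical content -- the role of almost periodic points and their closures -- completely explicit, and it works uniformly without invoking the ideal structure on formulas. One small bookkeeping point worth being aware of: applying Lemma 1.7(i) to $S_{G}(M^{ext})$ requires that the $G(M)$-action on $S_{G}(M^{ext})$ has a dense orbit, which holds because types of elements of $G(M)$ are dense in $S_{G}(M^{ext})$.
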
 
\begin{proof}  By Remark 1.8 (ii) a definable subset $Y$ of $G(M)$ is weak generic (in $Th(M)$) iff it is weak generic in $Th(M^{ext})$.  As the collection of non weak generic definable subsets of $G(M)$ in $Th(M^{ext})$ is a proper ideal, $p$ extends to a weak generic type in $S_{G}(M^{ext})$.
\end{proof} 

So by Lemmas 5.1 and 5.2 we can conclude the final sentence in Theorem 1.2: Suppose $p\in S_{G}(M)$ is weak generic. Extend to weak generic $p'\in S_{G}(M^{ext})$. By the first part of Theorem 1.2, $p'$ is almost periodic. By Lemma 5.1, so is $p$. 

\vspace{2mm}
\noindent
We now assume that $T$ is $o$-minimal, and that $G$ is definably amenable, definably connected, and defined over $M$. Let $1\to H \to G \to C \to 1$ be the canonical decomposition. 

\begin{Lemma} Let $M\prec N$, and $q\in S_{G}(N)$ be almost periodic. Then $p = q|M \in S_{G}(M)$ is almost periodic.
\end{Lemma}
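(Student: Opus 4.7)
The strategy is to pass to the Shelah expansions via Lemma 5.1 and apply the structural characterization from Lemma 3.2, then descend back to $M$. Using Lemma 5.1 in the forward direction, extend $q$ to an almost periodic $q^{\sharp} \in S_{G}(N^{ext})$. By Lemma 1.21 (equivalently, Lemma 3.2) applied in $Th(N^{ext})$, the type $q^{\sharp}$ is $H(N)$-invariant, and there is an idempotent $v^{\sharp}$ in the unique minimal subflow $\mathcal{M}(C) \subseteq S_{C}(N^{ext})$ (which coincides with the set of generic types of $C$, since $C$ is $fsg$) such that $v^{\sharp} * q^{\sharp} = q^{\sharp}$.

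I would then restrict everything from $N^{ext}$ down to $M^{ext}$. Since $NIP$ guarantees $M^{ext} \prec N^{ext}$ (by quantifier elimination in $Th(M^{ext}) = Th(N^{ext})$ together with the observation that each external predicate $P_{X}$ is interpreted as the intersection of a fixed $X$ with the ambient sort), the restrictions $p^{\sharp} := q^{\sharp}|M^{ext} \in S_{G}(M^{ext})$ and $w := v^{\sharp}|M^{ext} \in S_{C}(M^{ext})$ are well-defined complete types. The $H(N)$-invariance of $q^{\sharp}$ descends trivially to $H(M)$-invariance of $p^{\sharp}$ since $H(M) \subseteq H(N)$. Since $C$ is $fsg$ and $f$-genericity of a formula is model-independent, $w$ remains generic in $S_{C}(M^{ext})$.

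The main step is to verify $w * p^{\sharp} = p^{\sharp}$, after which Lemma 3.2 yields that $p^{\sharp}$ is almost periodic in $S_{G}(M^{ext})$, and then Lemma 5.1 in the easier (right-to-left) direction concludes that $p = q|M = p^{\sharp}|M$ is almost periodic in $S_{G}(M)$. For this verification I would use the concrete construction of $*$ on an $H$-invariant type recalled in the proof of Lemma 1.21: realize $v^{\sharp}$ by some $c$, lift to $g \in G$ with $\pi(g) = c$, and realize the unique global heir of $q^{\sharp}$ as $a$ over $N^{ext}, c$; then $ga$ realizes $v^{\sharp} * q^{\sharp} = q^{\sharp}$. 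Restricting to $M^{ext}$, the same realization $ga$ witnesses $w * p^{\sharp} = p^{\sharp}$, provided the unique heir of $p^{\sharp}$ over $M^{ext}, c$ is the restriction of the heir of $q^{\sharp}$ over $N^{ext}, c$, which is the principal technical obstacle and follows from $NIP$-uniqueness of heirs together with the compatibility of the defining scheme under restriction of parameters.
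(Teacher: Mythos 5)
Your argument rests on restricting $q^{\sharp}\in S_{G}(N^{ext})$ and $v^{\sharp}\in S_{C}(N^{ext})$ down to $M^{ext}$, justified by the claim that $M^{ext}\prec N^{ext}$. That claim is false, and the restrictions $q^{\sharp}|M^{ext}$, $v^{\sharp}|M^{ext}$ are not well-defined. The two structures live in genuinely different languages: a predicate of $M^{ext}$ is an externally definable \emph{subset of $M$}, i.e.\ an equivalence class of formulas over $\bar M$ having the same trace on $M$, and such a class does not determine a trace on $N$. Even if one fixes, for each external predicate, a representative formula $\phi(x,b)$ over $\bar M$ (the ``fixed $X$'' you mention), the resulting comparison is not elementary. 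Concretely, take $M=\R$, $N$ a proper elementary extension, $\varepsilon\in N$ a positive infinitesimal. Then $(0,\varepsilon)\cap M=\emptyset=\{x\ne x\}\cap M$, so $\forall x\,\bigl(P_{(0,\varepsilon)}(x)\leftrightarrow P_{x\ne x}(x)\bigr)$ holds in the $M$-side structure, but $(0,\varepsilon)\cap N\ne\emptyset$, so it fails on the $N$-side. Equivalently, under the identifications $S_{G,ext}(M)\cong S_{G,M}(\bar M)$ and $S_{G,ext}(N)\cong S_{G,N}(\bar M)$, the former is a \emph{subspace} of the latter (finitely satisfiable in $M$ implies finitely satisfiable in $N$), not a quotient, and there is no canonical retraction. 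A type $q^{\sharp}$ over $N^{ext}$ may assign different truth values to $\bar M$-definable sets that agree on $M$, so ``$q^{\sharp}|M^{ext}$'' has no meaning. This kills the plan as written: you cannot descend the idempotent equation $v^{\sharp}*q^{\sharp}=q^{\sharp}$ to $M^{ext}$ by restriction.

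The paper's proof avoids the Shelah expansion of $N$ entirely for this lemma. It works with $p=q|M\in S_G(M)$ (which is well-defined, since $M\prec N$) and uses the criterion of Remark 1.11(v): $p$ is almost periodic iff for each $\phi\in p$ the externally definable subset $d_{p,M}(\phi)$ of $G(M)$ is left generic. Since $q$ is $H(N)$-invariant, $p$ is $H(M)$-invariant, so $\phi\in gp$ depends only on $g/H$, and genericity of $d_{p,M}(\phi)$ reduces to genericity of $\pi(d_{p,M}(\phi))\subseteq C(M)$. Almost periodicity of $q$ gives genericity of $\pi(d_{q,N}(\phi))\subseteq C(N)$, and the compatibility $\pi(d_{q,N}(\phi))\cap C(M)=\pi(d_{p,M}(\phi))$ from Remark 1.11(iv) together with the transfer of generics of the $fsg$ group $C$ between models (end of \S 1.3, \cite{Pillay-fsg}) finishes the proof. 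Note that even in the later Proposition 5.4, where both $S_G(M^{ext})$ and $S_G(N^{ext})$ do appear, the paper never restricts a type from $N^{ext}$ to $M^{ext}$: the type $\bar p\in S_G(M^{ext})$ is \emph{built} as $v_M*p'$ from an independently chosen almost periodic extension $p'$ of $p$, and only then matched against $\bar q$ via the global generic idempotent $v$ of $C$. If you want to run a Shelah-expansion argument, that is the pattern to imitate; the naive restriction is the step that fails.
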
 
\begin{proof} We will show that for any $\phi(x)\in p$, $d_{p,M}(\phi)$ (an externally definable subset of $G(M)$) is left generic, and use  1.11(v). Now $q$ is $H(N)$-invariant, whereby $p$ is $H(M)$-invariant.  So for $g\in G(M)$, whether or not $\phi\in gp$ depends on $g/H\in C(M)$.  Hence $d_{p,M}(\phi)\subseteq G(M)$ is left generic iff $\pi(d_{p,M}(\phi))$ an externally definable subset of $C(M)$, is left generic. 

Now as $q$ {\em is} almost periodic, and $\phi\in q$, then $\pi(d_{q,N}(\phi))\in C(N)$ is left generic. By the discussion at the end of subsection 1.3 (or see \cite{Pillay-fsg}, $\pi(d_{q,N}(\phi))\cap C(M)$ is left generic. But by 1.11 (iv) $\pi(d_{q,N}(\phi))\cap C(M)$ is precisely $\pi(d_{p,M}(\phi))$. Hence $p$ is almost periodic, as required. 
\end{proof}

At this point we can also complete the proof of Theorem 1.3: We are given $p\in S_{G}(\R)$ which is weakly generic but not almost periodic. Let $p'\in S_{G}({\bar M})$ be a weakly generic type extending $p$. Then By Lemma 5.3, $p'$ is not almost periodic.

\begin{Proposition} Let $\cal M$ be a minimal subflow of $S_{G}(N)$  (as a $G(N)$-flow). Then ${\cal M}|S_{G}(M) = \{q|M: q\in {\cal M}\}$ is a minimal subflow of $S_{G}(M)$ (as a $G(M)$-flow).
\end{Proposition}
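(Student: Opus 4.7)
The plan is first to check that $\pi(\mathcal{M})$ is closed and $G(M)$-invariant, then to invoke Lemma~5.3 to conclude every point of $\pi(\mathcal{M})$ is almost periodic, and finally to do the substantive work of showing $\pi(\mathcal{M})$ is a \emph{single} minimal subflow, not merely a union of several. The first two points are immediate: $\pi(\mathcal{M})$ is the continuous image of the compact set $\mathcal{M}$, and the restriction map is equivariant for the embedded $G(M)\subseteq G(N)$ action. Lemma~5.3 then makes every $p\in\pi(\mathcal{M})$ almost periodic, so $\pi(\mathcal{M})$ is at worst a union of minimal $G(M)$-subflows.

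To cut this down to one, I would pick, by Zorn applied to $\mathcal{M}$ viewed as a $G(M)$-flow, a minimal $G(M)$-subflow $\mathcal{M}_{0}\subseteq\mathcal{M}$. Its image $\pi(\mathcal{M}_{0})$ is automatically a minimal $G(M)$-subflow of $S_{G}(M)$, being the continuous $G(M)$-equivariant image of a minimal $G(M)$-flow. It therefore suffices to prove $\pi(\mathcal{M})\subseteq\pi(\mathcal{M}_{0})$, i.e.\ that $q|M\in\pi(\mathcal{M}_{0})$ for every $q\in\mathcal{M}$.

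For this I would use the canonical decomposition $1\to H\to G\overset{\pi_{C}}{\to}C\to 1$. Any $p\in\mathcal{M}$ is weakly generic, hence $f$-generic, so (extending to a global $\tilde p$) one has $H(N)\subseteq H=H^{00}\subseteq G^{00}=Stab(\tilde p)$, giving $H(N)$-invariance of $p$. Thus $\pi_{C}(q),\pi_{C}(q_{0})$ are generic types of $C$ over $N$, and since $C$ is $fsg$ their restrictions lie in the unique minimal $C(M)$-subflow $\mathcal{M}(C)_{M}\subseteq S_{C}(M)$. Minimality of $\mathcal{M}(C)_{M}$ yields a net $(c_{\alpha})\subseteq C(M)$ with $c_{\alpha}\cdot\pi_{C}(q_{0}|M)\to\pi_{C}(q|M)$; lifting each $c_{\alpha}$ to $g_{\alpha}\in G(M)$ (surjectivity of $\pi_{C}$ on $M$-points) and extracting a convergent subnet produces $g_{\alpha}\cdot(q_{0}|M)\to p^{*}\in\pi(\mathcal{M}_{0})$, where $p^{*}$ is $H(M)$-invariant and satisfies $\pi_{C}(p^{*})=\pi_{C}(q|M)$.

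The principal obstacle is then to identify $p^{*}=q|M$. Both are $H(M)$-invariant points of $\pi(\mathcal{M})$ sharing a $\pi_{C}$-projection; over $N$ the analogue of Lemma~1.21 (the structure analysis of Section~2 extended to general $H$ via a definable section $f:C\to G$) forces a unique $H(N)$-invariant lift inside $\mathcal{M}$ of each element of $\mathcal{M}(C)_{N}$, and I would argue this uniqueness persists on restriction to $M$ using the definability over $M$ of the relevant $H$-invariant types in the $o$-minimal setting (the $\pm\infty$ types when $\dim(H)=1$, and in general the global definable $f$-generic on torsion-free $H$ from Proposition~4.7 of \cite{CP}), combined with a weak-orthogonality argument as in Remark~2.3 to show that $tp((f(\pi_{C}(q)),h)/M)$ is determined by $\pi_{C}(q|M)$ together with the fixed $H$-invariant type. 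Once $p^{*}=q|M$ is established, $q|M\in\pi(\mathcal{M}_{0})$ follows and hence $\pi(\mathcal{M})=\pi(\mathcal{M}_{0})$ is a single minimal $G(M)$-subflow.
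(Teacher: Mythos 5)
Your overall strategy is genuinely different from the paper's, and the first three of your four steps are sound. You pass to a minimal $G(M)$-subflow $\mathcal{M}_{0}\subseteq\mathcal{M}$ by Zorn, observe that its image under restriction is a minimal $G(M)$-subflow, and reduce everything to showing $\pi(\mathcal{M})\subseteq\pi(\mathcal{M}_{0})$. The net argument through $C$ is also correctly set up: each $g_{\alpha}\cdot(q_{0}|M)$ lies in $\pi(\mathcal{M}_{0})$, the limit $p^{*}$ is $H(M)$-invariant, and $\pi_{C}(p^{*})=\pi_{C}(q|M)$. The paper instead works by extending $q$ and $p=q|M$ to almost periodic types over $N^{\mathrm{ext}}$ and $M^{\mathrm{ext}}$, parameterizing the minimal subflows by $u_{N}*\bar q$ and $u_{M}*\bar p$ as $u$ ranges over global generics of $C$, and carrying out an explicit computation with the $d$-sets of Definition~1.10 and Lemma~1.12 to show that the two parametrizations match on restriction to $M$. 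Your route avoids the semigroup machinery but shifts the weight onto a delicate injectivity statement at the end.

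That is exactly where the gap is. You reduce to showing that two $H(M)$-invariant points of $\pi(\mathcal{M})$ with the same $\pi_{C}$-image coincide, i.e.\ that $\pi_{C}$ is injective on $\pi(\mathcal{M})\subseteq S_{G}(M)$. This injectivity is \emph{not} established by the paper over a plain model $M$ for general torsion-free $H$. Lemma~1.21(ii) gives it over $M^{\mathrm{ext}}$ (where the semigroup structure is available), and Corollary~2.5 gives it over a saturated model, but the latter relies on the weak orthogonality statement of Remark~2.3, whose proof is specific to $\dim(H)=1$: it uses that $p_{0}^{+}$ is a $1$-type and that failure of weak orthogonality in an $o$-minimal structure forces a realization of $p_{0}^{+}$ into the definable closure of a realization of the $C$-generic, leading to a definable-and-finitely-satisfiable contradiction. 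For $\dim(H)>1$ the invariant type of $H$ is no longer a $1$-type, there are many $H(M)$-invariant types (roughly $2^{\dim H}$ of them), and the argument you cite does not apply. Your suggestion to use the global definable $f$-generic on $H$ from \cite{CP} plus a weak-orthogonality argument is plausible in spirit but would need a genuinely new proof (e.g.\ a dimension induction up the chain $1=H_{0}<\dots<H_{n}=H$), and as written it is circular with the theorem you are proving. Moreover, you also need the orthogonality statement \emph{over $M$} rather than globally, and lifting to $\mathcal{M}$ over $N$ does not help directly since two elements of $\mathcal{M}$ with the same $\pi_{C}$-image over $M$ may well have different $\pi_{C}$-images over $N$. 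The paper's detour through $M^{\mathrm{ext}}$ and $N^{\mathrm{ext}}$, combined with Remark~1.11(iv) relating $d_{p,M}$ and $d_{q,N}$ and the homeomorphism between generic $C$-types over $\bar M$, $M^{\mathrm{ext}}$, $N^{\mathrm{ext}}$, is precisely what lets it sidestep this injectivity question.
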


\begin{proof}  So ${\cal M} = cl(G(N)\cdot q)$ for an almost periodic $q\in S_{G}(N)$. 
By Lemma 5.3,  $p = q|M\in S_{G}(M)$ is almost periodic.   
By Lemma 5.1 let  ${\bar q}\in S_{G}(N^{ext})$ be an almost periodic extension of $q$.  So by 1.21 (iv)  there is a generic idempotent
$u\in S_{C}(N^{ext})$ such that  $u*{\bar q} ={\bar q}$.  By remarks at the end of section 1.3, we can write $u$ as $v_{N}$ for some global generic idempotent $v\in S_{C}({\bar M})$. 

Let $p'\in S_{G}(M^{ext})$ be an almost periodic extension of $p$ (by Lemma 5.1), and let  
${\bar p} =  v_{M}* p'$. 

\vspace{2mm}
\noindent
{\em Claim.}  ${\bar p}\in S_{G}(M^{ext})$ is almost periodic and extends $p$. 
\newline
{\em Proof of claim.}  Clearly ${\bar p}$ is almost periodic (by 3.2 for example).   We show it extends $p$.   Suppose $\phi(x)$ is a formula over $M$ and $\phi(x)\in p$. Then $\phi(x)\in {\bar q}$.  So $\phi(x)\in v_{N}*{\bar q}$. By Lemma 1.12, $\pi(d_{\bar q,N}(\phi)(N)))\in v_{N}$. So $\pi(d_{{\bar q},N}(\phi(N)))\cap C(M)\in v_{M}$. But by Remark 1.11, 
$d_{{\bar q},N}(\phi(N)) = d_{q,N}(\phi(N)) = d_{p,M}(\phi(M))\cap G(M)$ and the same after taking $\pi$ (i.e. projecting to $C$).  Hence $d_{p',M}(\phi)\in v_{M}$, so by Lemma 1.12, $\phi \in {\bar p} = v_{M}*p'$ as required. 

\vspace{2mm}
\noindent
Remember that $v$ and so  $v_{M}$, $v_{N}$ are idempotents. 

Now we have  the following almost periodic types: $q\in S_{G}(N)$, its restriction $p\in S_{G}(M)$, as well as ${\bar q}\in S_{G}(N^{ext})$ and ${\bar p}\in S_{G}(M^{ext})$. 
Let ${\cal M}_{q}$, ${\cal M}_{p}$, ${\cal M}_{\bar q}$, ${\cal M}_{\bar p}$, be the minimal subflows of the relevant spaces generated by the respective types.

Let $r:S_{G}(N)\to S_{G}(M)$, $r_{N}:S_{G}(N^{ext})\to S_{G}(N)$, $r_{M}:S_{G}(M^{ext})\to S_{G}(M)$ be the restriction maps.   So $r_{N}$ takes ${\cal M}_{\bar q}$ onto ${\cal 
M}_{q}$, and $r_{M}$ takes ${\cal M}_{\bar p}$ onto ${\cal M}_{p}$. 

We want to prove that $r$ takes ${\cal M}_{p}$ onto ${\cal M}_{q}$.   We will make use of  the following map $\chi: {\cal M}_{\bar p} \to {\cal M}_{\bar q}$, defined by 
$\chi(u_{N}*{\bar p}) = (u_{M}*{\bar q})$ for each global generic type $u$ of $C$.  This can be shown to be well-defined and onto using 1.21 (v).  

\vspace{2mm}
\noindent
To complete the proof of the Proposition it suffices to prove: 
\newline
{\em Claim.}  For $s\in {\cal M}_{\bar q}$, $(r_{M}\circ \chi)(s) = (r\circ r_{N})(s)$.
\newline
{\em Proof.}  Suppose $s = u_{N}*{\bar q}$ and $\phi(x)$ is an $L_{M}$-formula. Then $\phi(x)\in (r\circ r_{N})(u_{N}*{\bar q})$ iff  $\phi(x)\in u_{N}*{\bar q}$ iff 
$\pi(d_{{\bar q},N}(\phi(N)))  \in u_{N}$. So $\pi(d_{p,M}(\phi(M)))\in u_{M}$, whereby $\phi\in u_{M}*{\bar p}$, proving the claim.


\end{proof}


\begin{thebibliography}{99}



\bibitem{Auslander} J.Auslander, {\em Minimal flows and their extensions}, North Holland, Amsterdam, 1988.

\bibitem{Chernikov-Simon} A. Chernikov and P. Simon, Definably amenable NIP groups, preprint 2015.

\bibitem{CPS} A. Chernikov, A. Pillay, P. Simon, External definability and groups in NIP theories, Journal LMS, 90(1) (2014), 213-240.

\bibitem{CP}A. Conversano, A. Pillay, Connected components of definable groups and o-minimality I, Advances in Mathematics, vol.231(2012), pp. 605-623.



\bibitem{Glasner} E. Glasner, {\em Proximal flows}, Lecture Notes in Math., 517, Springer, 1976. 

\bibitem{GPPII} J. Gismatullin, D. Penazzi and  A. Pillay, On compactifications and the topological dynamics of definable groups, Annals of  Pure and Applied Logic, 165(2014), 552-562.

\bibitem{NIPI} E. Hrushovski, Y. Peterzil, and A. Pillay, Groups, measures and the $NIP$, Journal AMS 21 (2008), 563-596.

\bibitem{NIPII} E. Hrushovski and A. Pillay, On NIP and invariant measures, J. European Math. Soc. 13 (2011), 1005 --- 1061.







\bibitem{Marker-Steinhorn} D. Marker and C. Steinhorn, Definable types in $o$-minimal theories, Journal of Symbolic Logic 59 (1994), 185-198.




\bibitem{Newelski1} L. Newelski, Topological dynamics of definable group actions, Journal of Symbolic Logic, 74 (2009), 50-72.

\bibitem{Newelski-Petrykowski} L. Newelski and M. Petrykowski,  Weak generic types and coverings of groups I, Fund. Math., 191 (3) (2006), 201-225.



\bibitem{PS}Y. Peterzil, C. Steinhorn, Definable compactness and definable subgroups of $o$-minimal groups, J. London Math. Soc 59(1999), 769-786.






\bibitem{Pillay-fsg}  A. Pillay, Topological dynamics and definable groups, J. Symbolic Logic, 78:657-666,2013.






\bibitem{Poizat} B. Poizat, {\em A Course in Model Theory}, Springer, 2000. 




\bibitem{Simon-distal} P. Simon, Distal and non-distal NIP theories,  Annals of  Pure and Applied Logic,  164 (2013).


\end{thebibliography}
\end{document}